\documentclass[11pt,bezier]{article}
\usepackage{amsmath,amssymb,amsfonts}

\usepackage[top=2.5cm,right=3cm,bottom=2cm,left=3cm]{geometry}

\usepackage{graphicx,xcolor}
\usepackage[pagebackref=true,colorlinks,linkcolor=red,citecolor=blue]{hyperref}
\usepackage{pgf,tikz}
\usepackage{mathrsfs}
\usepackage{float}
\usetikzlibrary{arrows}

\newtheorem{thm}{Theorem}[section]
\newtheorem{defi}[thm]{Definition}
\newtheorem{prop}[thm]{Proposition}
\newtheorem{remark}[thm]{Remark}
\newtheorem{cor}[thm]{Corollary}

\newtheorem{example}[thm]{Example}

\newenvironment{proof}[1]{{\bf proof. }{\rm #1}}{\hfill $\rule {2mm}{2mm}$\\}

%---------------------------------------------------------------------------------------------------------------------------------------------------------------------------
%\begin{document}
%\setcounter{page}{1}

%%%%%%%%%%%%%%%%%%%%%%%%%%%%%%%%%%%%%%%%%%%%%%%
%% Please do not remove the following statement.
%%%%%%%%%%%%%%%%%%%%%%%%%%%%%%%%%%%%%%%%%%%%%%%

%%%%%%%%%%%%%%%%%%%%%%%%%%%%%%%%%%%%%%%%%%%%%%%
%% ==>start from here
%% Start from here
%%%%%%%%%%%%%%%%%%%%%%%%%%%%%%%%%%%%%%%%%%%%%%%
%%%%%%%%%%%%%%%%%%%%%%%%%%%%%%%%%%%%%%%%%%%%%%%%%%%%%%%%%%%%%%%%%%%%%
% Insert title of your article. Note: \title[short title]{full title}
%%%%%%%%%%%%%%%%%%%%%%%%%%%%%%%%%%%%%%%%%%%%%%%%%%%%%%%%%%%%%%%%%%%%%
\begin{document}
\title{Connected zero forcing sets and connected propagation time of graphs}
\author{{\sc M. Khosravi$^1$, S. Rashidi$^2$ and A. Sheikhhosseni$^3$ }\\
[5mm]
	 $^1,^3$Department of Pure Mathematics,\\
	  Faculty of Mathematics and Computer,\\
 Shahid Bahonar University of Kerman,\\
Kerman, Iran \\
{khosravi$ _- $m@uk.ac.ir  }\\
{sheikhhosseini@uk.ac.ir }\\
%\vspace{-0.25cm}
$^2$Department of Applied Mathematics,\\
  Faculty of Mathematics and Computer,\\
 Shahid Bahonar University of Kerman,\\
Kerman, Iran \\
{saeedeh.rashidi@uk.ac.ir}}
\date{\bf{ }}

\maketitle

%%%%%%%%%%%%%%%%%%%%%%%%%%%%%%%%%%%%%%%%%%%
\begin{abstract}
The zero forcing number $Z(G)$ of a graph $G$
is the minimum cardinality of a set $S$ with colored (black) vertices which forces the set $V(G)$
to be colored (black) after some times. ``color change rule'': a white vertex is changed to a black
vertex when it is the only white neighbor of a black vertex.
In this case, we say that the black vertex forces the white vertex.
We investigate here the concept of connected zero forcing set
and connected zero forcing number. We discusses this subject for special graphs and some products of graphs.
Also we introduce the connected propagation 
time. Graphs with extreme minimum connected propagation
times and maximum propagation times $|G|-1$ and $|G|-2$ are characterized.

\end{abstract}
\hspace*{-2.7mm} {\bf MSC 2010:} {\sf 05C50, 05C12}\\
\hspace*{-2.7mm} {\bf KEYWORDS:} { \sf zero forcing number; connected zero forcing number; propagation time}

\maketitle
%%%%%%%%%%%%%%%%%%%%%%%%%%%%%%%%%
\section{Introduction}
Let $V (G)$ and $E(G)$ be the vertex set and the edge set of a graph $G = (V,E)$, respectively.
For a vertex $v \in V(G)$, the $open\ neighborhood$ $N(v)$ is the set $\{u\in V (G) : uv \in E(G)\}$.
The $closed\ neighborhood$ $N[v]=N(v)\cup\{v\}$. Simple graph containing no graph loops or multiple edges.
In this paper each graph is undirected, finite,
simple and the vertex set is nonempty. A graph $G_0=(V_0, E_0)$
is a $subgraph$ of graph $G=(V,E)$ if $V'\subseteq V$ and $E'\subseteq E$. \\
A $complete\ graph$ is a graph in which every two distinct vertices are adjacent.
The complete graph on $n$ vertices is denoted by $K_n$.
A graph $(V,E)$ is $bipartite$ if the vertex set $V$ can be partitioned into two 
nonempty subsets $X$ and $Y$ such that every edge of graph has one 
vertex (endpoint) in $X$ and one in $Y$.
A $complete\ bipartite\ graph$ is bipartite graph $K_{n,m}$ with 
$|X|=n,\ |Y|=m$ and $E=\{\{u,w\}: u\in U, w\in W\}$.
The $path$ is the graph $P_n$ with $V(P_n) =\{ v_1,v_2,...,v_n\}$ and
$E(P_n) =\{ v_1v_2,v_2v_3,...,v_{n-1}v_n\}$.
The $cycle$ is the graph $C_n$ with $V(C_n) =\{ v_1,v_2,...,v_n\}$ 
and $E(C_n)=\{ v_1v_2,v_2v_3,...,v_{n-1}v_n, v_nv_1\}$.\\
The following graph operations are used to construct families of graphs:\\
$ \bullet $ The $Cartesian\ product$ of two  graphs $ G $ and $ H, $
is denoted by $ G  \square H, $ is the graph with vertex set $ V(G)
\times V(H)$ such that $ (u, v) $ is adjacent to $ (u', v' ) $ if
and only if $ (1)  u=u'$  and $ vv'  \in E(H), $ or   $ (2)  v=v'$
and $ uu' \in E(G). $ \\
$ \bullet $ The $strong\  product$ of two graphs $ G $ and $ H, $ is denoted by $ G  \boxtimes H, $
is the graph with vertex set $ V(G)  \times V(H)$ such that $ (u, v) $ is adjacent to
$ (u', v' ) $ if and only if $ (1)  u=u'$  and $ vv'  \in E(H), $ or   $ (2)  v=v'$  and
$ uu' \in E(G), $  or  $ (3)  uu' \in E(G)$  and $ vv' \in E(H). $ \\
$ \bullet $ The $corona\  product$ of two graphs $ G $ and $ H, $
denoted $ G \circ H, $ is the graph of order $ |G| |H|+|G| $
obtained by taking one copy of $ G $ and $|G|$ copies of $ H, $
and jointing all the vertices in the $i$th copy of $ H $ to the
$i$th vertex of $ G. $ \\
$ \bullet $ The $generalized\  corona$
	of $ G $ with $ H_{1}, H_{2}, \ldots, H_{n}, $  is defined as the
	graph obtained by joining all vertices of $ H_{i} $ to the $ i-
	$th vertex of $ G. $ We denote this graph by $ G \langle H_{1},
	H_{2}, \ldots, H_{n} \rangle. $\\
	$ \bullet $ The $n$th $supertraingle$ is denoted by $T_n$ is an equilateral triangular grid
	such that each side of it contains $n$ vertices (see Figure~\ref{1}). \\
$ \bullet $ The $wheel\ graph$ $W_n$ of order $n$ ($n$-wheel)  is a graph that contains a cycle of order $n-1$,
and for which every graph vertex in the cycle is connected to one other graph vertex.\\
$ \bullet$ The $star\ graph$ $S_n$ of order $n$ ($n$-star) complete bipartite graph $K_{1,n-1}$.\\
Zero forcing sets and the zero forcing number were introduced in~\cite{1}. The zero forcing number 
is a useful tool for determining the minimum rank of structured families of graphs and small graphs,
and is motivated by simple observations about null vectors of matrices.
\begin{defi}\label{d1}~\cite{1}
Color-change rule: Let $G$ be a graph with  each vertex colored
either white or black, $ u $ be a black vertex of $ G, $ and
exactly one neighbor $ v $ of $ u $ be white. Then change the
color of $ v $ to black. When this rule is applied, we say $ u $
forces $ v, $ and write $ u \rightarrow v. $
\end{defi}
\begin{defi}~\cite{1}
A $zero\  forcing\ set$ ( or $ZFS$ for brevity)  of a  graph $ G$ is a
subset $Z$ of vertices such that if initially the  vertices in  $ Z $ are  colored
  black and remaining vertices
 are colored white, the entire graph $ G $ may be colored black
  by repeatedly applying the
 color-change rule.
The zero forcing number of $ G,  Z(G), $ is the  minimum size of a
zero forcing set. Any zero forcing set of order  $Z(G)  $ is
called a minimum zero forcing set.
\end{defi}
For a coloring of $G$, the $derived\ coloring$
is the result of applying the color-change rule until no more changes are possible.
For the black set of vertices $B$, the derived coloring is denoted by $der(B)$ and it is unique~\cite{1}. 
\begin{example}
	In following figure, we see that $Z(T_4)=4$ and $Z(C_8)=2$.
	\begin{figure}[H]~\label{1}
		\vspace{-2.5cm}
		\hspace{1cm}
		\definecolor{ffffff}{rgb}{1.,1.,1.}
		\definecolor{qqqqff}{rgb}{0.,0.,1.}
		\begin{tikzpicture}[line cap=round,line join=round,>=triangle 45,x=1.0cm,y=1.0cm]
		\clip(-0.5,-1.2) rectangle (11.,4.6);
		\fill[color=ffffff] (3.03006,4.039) -- (0.,0.) -- (5.56,0.) -- cycle;
		\draw [color=ffffff] (3.03006,4.039)-- (0.,0.);
		\draw [color=ffffff] (0.,0.)-- (5.56,0.);
		\draw [color=ffffff] (5.56,0.)-- (3.03006,4.039);
		\draw(8.722369552238806,1.6474711194029854) circle (1.6898937223555441cm);
		\draw (2.04512,2.708)-- (3.874055635500463,2.6915773845283404);
		\draw (1.0651699175065268,1.4198468996682778)-- (4.6897987065389,1.3892594386781436);
		\draw (4.6897987065389,1.3892594386781436)-- (4.,0.);
		\draw (3.874055635500463,2.6915773845283404)-- (2.,0.);
		\draw (1.0651699175065268,1.4198468996682778)-- (2.,0.);
		\draw (2.04512,2.708)-- (4.,0.);
		\draw (9.81816,3.5066) node[anchor=north west] {l};
		\draw (10.45704,2.73462) node[anchor=north west] {b};
		\draw (10.64338,1.69644) node[anchor=north west] {j};
		\draw (7.55546,1.05756) node[anchor=north west] {h};
		\draw (10.03112,0.89784) node[anchor=north west] {i};
		\draw (8.35406,0.3122) node[anchor=north west] {c};
		\draw (7.36912,2.41518) node[anchor=north west] {g};
		\draw (7.76842,3.66632) node[anchor=north west] {a};
		\draw (2.65738,4.35844) node[anchor=north west] {a};
		\draw (1.69906,3.13392) node[anchor=north west] {g};
		\draw (0.74074,1.80292) node[anchor=north west] {h};
		\draw (-0.00462,0.76474) node[anchor=north west] {c};
		\draw (4.1481,3.32026) node[anchor=north west] {k};
		\draw (2.92358,2.3087) node[anchor=north west] {r};
		\draw (2.04512,0.8446) node[anchor=north west] {i};
		\draw (4.09486,0.1791) node[anchor=north west] {j};
		\draw (5.66544,0.07262) node[anchor=north west] {b};
		\draw (4.92008,1.74968) node[anchor=north west] {l};
		\draw (2.47104,-0.45978) node[anchor=north west] {$T_4$};
		\draw (8.88646,-0.32668) node[anchor=north west] {$C_8$};
		\draw (3.03006,4.039)-- (0.,0.);
		\draw (0.,0.)-- (5.56,0.);
		\draw (5.56,0.)-- (3.03006,4.039);
		\begin{scriptsize}
		\draw [fill=qqqqff] (0.,0.) circle (1.0pt);
		\draw [fill=qqqqff] (3.03006,4.039) circle (1.0pt);
		\draw [color=qqqqff] (5.56,0.) circle (1.0pt);
		\draw [fill=qqqqff] (9.41886,3.18716) circle (1.0pt);
		\draw [color=qqqqff] (7.15616,2.28208) circle (1.0pt);
		\draw [color=qqqqff] (10.03112,0.5784) circle (1.0pt);
		\draw [fill=qqqqff] (2.04512,2.708) circle (1.0pt);
		\draw [fill=qqqqff] (1.0651699175065268,1.4198468996682778) circle (1.0pt);
		\draw [color=qqqqff] (2.,0.) circle (1.0pt);
		\draw [color=qqqqff] (4.,0.) circle (1.0pt);
		\draw [color=qqqqff] (3.874055635500463,2.6915773845283404) circle (1.0pt);
		\draw [color=qqqqff] (4.6897987065389,1.3892594386781436) circle (1.0pt);
		\draw [color=qqqqff] (8.01495038059235,3.18216943126878) circle (1.0pt);
		\draw [fill=qqqqff] (10.244403971664957,2.381740837516689) circle (1.0pt);
		\draw [color=qqqqff] (10.37718,1.61658) circle (1.0pt);
		\draw [color=qqqqff] (7.286515776782339,0.7563680230308056) circle (1.0pt);
		\draw [color=qqqqff] (8.349817924124759,-8.450144882805599E-4) circle (1.0pt);
		\draw [fill=qqqqff] (2.9819314696692576,1.4102807231828292) circle (1.5pt);
		\end{scriptsize}
		\end{tikzpicture}
			\vspace{1.5cm}
		%	\caption{$Z(T_4)=4$ and $Z(C_8)=2$}
\end{figure}
\end{example}
\vspace{1cm}
\begin{defi}~\cite{10}
Let $B$ be a zero forcing set of $G$ and $B^0=B$.
For $t \geq 0$, we define $B^{(t+1)}$
to be the set of vertices $w$ for which there exists a vertex
$b\in\cup_{s=0}^{t}{B^{(s)}}$ such that $w$ is the only neighbor of $b$ not in $\cup_{s=0}^{t}{B^{(s)}}$. 
$The\ propagation\ time$ of $B$ in $G$, denoted $pt(G, B)$, is the smallest integer $t'$ such that
$V=\cup_{s=0}^{t'}{B^{(s)}}$.
\end{defi}
In the other word, the propagation time is the number of steps it take for an initial zero forcing
set to force all vertices of a graph to black.\\
It is possible that two minimum zero forcing set of one graph have the different 
propagation time (see Example 1.2 of~\cite{10}).  
Zero forcing parameters were studied and applied to the minimum rank problem and quantum systems~\cite{5,6,10}.
Also, this concept was named as graph infection or graph propagation~\cite{1}.
For more information about 
the propagation time, see~\cite{2,3,4,5,6,8,9,11,13}.
This parameter is investigated for graphs with extra condition or some
type of the zero forcing set~\cite{7,14}.
\begin{defi}~\cite{10} The minimum propagation time of $G$ is
$pt(G) = min\{pt(G, B) | B$ is a minimum zero forcing set of $G\}$.
\end{defi}
\begin{defi}~\cite{10} The maximum propagation time of G is defined as
$PT(G) = max\{pt(G, B) | B$ is a minimum zero forcing set of $G\}.$
 \end{defi}
 Note that $Z (G)$ and $pt (G)$ are not subgraph monotone.
 That is, a graph may have a subgraph with greater zero forcing number or minimum propagation time,
 as we see in the following example. Also, see the Example 1.5 of~\cite{10}.  
\begin{example}\label{10}
In Figure~\ref{1}, Let $ZFS(T_4)=\{a,g,h,c\}$, $ZFS(C_8)=\{a,g\}$ and $ZFS(P_6)=\{a\}$
 Let $B_0=ZFS(T_4)=\{a,g,h,c\}$, $B_1=ZFC(C_8)=\{a,g\}$ and $B_2=ZFC(P_6)=\{a\}$. Then we have:
\begin{center}
\begin{tabular}{|c|c|c|c|}
\hline
$i$&$B_{0}^{(i)}$&$B_{1}^{(i)}$&$B_{2}^{(i)}$\\
\hline
\hline
$0$&$B_0=\{a,g,h,c\}$&$B_1=\{a,g\}$&$B_2=\{a\}$\\
\hline
$1$&$\{k,i\}$&$\{h,l\}$&$\{g\}$\\
\hline
$2$&$\{r\}$&$\{c,b\}$&$\{h\}$\\
\hline
$3$&$\{l,j\}$&$\{i,j\}$&$\{c\}$\\
\hline
$4$&$\{b\}$&$--$&$\{i\}$\\
\hline
$5$&$--$&$--$&$\{j\}$\\
\hline
\end{tabular}
\end{center}
So, $pt(T_4,B_0)=4$, $pt(C_8,B_1)=3$ and $pt(P_6,B_2)=5$.
It is easy to see
that The minimum propagation time of $T_4$ is $4$,
the minimum propagation time of its subgraph $C_8$ is $3$
and the minimum propagation time of its subgraph $P_6$ is $5$.
So, this concept is not subgraph monotone 
(The cycle $C_8$ and the path $P_6$ are subgraphs of the graph $T_4$.).
\end{example}
%%%%%%%%%%%%%%%%%%%%%%%%%%%%%%%%%%%%%%%%%%%%%%%%%%%%%%%%%%%%%%%%%%%%%%%%%%%%%%%%%%%%%%%%%%%%%%%%%%%%%%%%
%%%%%%%%%%%%%%%%%%%%%%%%%%%%%%%%%%%%%%%%%%%%%%%%%%%%%%%%%%%%%%%%%%%%%%%%%%%%%%%%%%%%%%%%%%%%%%%%%%%%%%%%
\section{ Connected zero forcing set }
In this section, we investigate the concept of connected zero forcing.
We do a comparison between the zero forcing set and connected zero forcing set
for special graphs. In \cite{n}, it is mentioned that $R\subseteq V(G)$ is a connected forcing set of G,
If the subset $R$ is forcing set and it induces a connected subgraph.
Now we have the following definition.
\begin{defi}\label{d2}
	A $connected\  zero\  forcing\ set$ ( or $CZFS$ for brevity) of a
	graph $ G,  $ is a zero forcing set such that be connected in
	components  of $ G$. The $connected\ zero\ forcing\ number$ of $ G,
	Z_{c}(G), $ is the minimum size of connected zero forcing sets.
	Any connected zero forcing set of order  $Z_{c}(G)  $ is called a
	minimum connected zero forcing set.
\end{defi}
Applying Definition , we  obtain the following proposition.
\begin{prop}
Every CZFS is a ZFS and if a ZFS is connected in components, then it is a CZFS.
\end{prop}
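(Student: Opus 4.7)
The plan is to observe that this proposition is essentially a direct unpacking of Definition \ref{d2}, so the proof reduces to checking that the two conditions appearing in that definition (being a zero forcing set and being connected in the components of $G$) match the two claims of the proposition.

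For the forward direction, I would take an arbitrary connected zero forcing set $R$ of $G$. By the very wording of Definition \ref{d2}, $R$ is defined to be a zero forcing set (with the extra connectedness condition), so in particular $R$ satisfies the color-change rule from Definition \ref{d1} and eventually forces all of $V(G)$ to black. Hence $R$ is a ZFS, and no computation is needed beyond citing the definition.

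For the converse, I would start with a ZFS $R$ of $G$ that is connected in components. To conclude $R$ is a CZFS, I need to verify both clauses in Definition \ref{d2}: (i) $R$ is a zero forcing set, which holds by hypothesis, and (ii) $R$ is connected in the components of $G$, which is also the hypothesis. Together these two facts are exactly what Definition \ref{d2} requires of a CZFS.

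The main (and only) obstacle is purely expository: making explicit that Definition \ref{d2} is really an ``if and only if'' characterization, so that neither implication requires any further combinatorial argument about forcing chains, induced subgraphs, or component structure. No nontrivial lemma or construction is needed, and the proposition can be dispatched in two short sentences.
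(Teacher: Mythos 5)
Your proposal is correct and matches the paper exactly: the paper gives no separate proof, stating only that the proposition follows by ``applying the definition,'' which is precisely your observation that both implications are immediate unpackings of Definition~\ref{d2}. No further comment is needed.
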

	\vspace{-0.5cm}
\begin{cor}
For any graph $ G,  \, Z(G)  \leq Z_{c}(G).  $
\end{cor}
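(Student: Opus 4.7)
The plan is to derive this inequality as an immediate consequence of the preceding proposition, which asserts that every connected zero forcing set is in particular a zero forcing set. The key observation is that once this containment between the two families of sets is established, the inequality between their minimum cardinalities follows from a standard monotonicity argument for minima over nested collections.

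Concretely, I would first fix a minimum connected zero forcing set $S$ of $G$, so that $|S| = Z_c(G)$. By the previous proposition, $S$ is itself a zero forcing set of $G$. Since $Z(G)$ is defined as the minimum cardinality over all zero forcing sets, and $S$ is a particular zero forcing set, we have $Z(G) \leq |S| = Z_c(G)$. No propagation argument or case analysis is required, because the color-change dynamics are exactly the same in both settings; the only additional constraint in the connected version is a structural one on the initial set.

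There is essentially no obstacle here: the proof is a one-line consequence of Proposition~2.2 combined with the definitions of $Z(G)$ and $Z_c(G)$. The only thing worth noting for the reader is that the inequality can be strict (as later examples in the paper presumably show), because requiring connectivity genuinely restricts the family of admissible initial sets and may force one to take additional vertices to bridge components of an otherwise minimum zero forcing set.
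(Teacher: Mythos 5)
Your argument is correct and is exactly the route the paper intends: the corollary is stated as an immediate consequence of the preceding proposition (every CZFS is a ZFS), so any minimum connected zero forcing set witnesses $Z(G)\leq Z_{c}(G)$. The paper gives no further proof, and none is needed.
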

	\vspace{-0.25cm}
It may be interesting to  compare the zero forcing number and
connected zero forcing number
of some well known graphs:
	\vspace{-0.25cm}
\begin{example}~\label{11}
In Figure~\ref{81}, we see the ZFS and CZFS
for the star graph $S_6$. 
\end{example}
	\begin{figure}[H]~\label{81}
		\hspace{1.5cm}
	\definecolor{qqqqff}{rgb}{0.,0.,1.}
	\definecolor{xdxdff}{rgb}{0.49019607843137253,0.49019607843137253,1.}
	\begin{tikzpicture}[line cap=round,line join=round,>=triangle 45,x=1.0cm,y=1.0cm]
	\clip(-1.5,-1.5) rectangle (8.5,1.2);
	\draw (0.,0.)-- (0.,1.);
	\draw (0.,0.)-- (1.,0.);
	\draw (0.,0.)-- (0.,-1.);
	\draw (0.,0.)-- (-1.,0.);
	\draw (-0.76,0.78)-- (0.,0.);
	\draw (7.,0.)-- (7.,1.);
	\draw (7.,0.)-- (8.,0.);
	\draw (7.,0.)-- (7.,-1.);
	\draw (7.,0.)-- (6.,0.);
	\draw (7.,0.)-- (6.3,0.7);
	\draw (1.76,-0.44) node[anchor=north west] {ZFS};
	\draw (4.78,-0.5) node[anchor=north west] {CZFS};
	\begin{scriptsize}
	\draw [color=xdxdff] (0.,0.) circle (1.5pt);
	\draw [fill=xdxdff] (0.,1.) circle (1.5pt);
	\draw [fill=qqqqff] (1.,0.) circle (1.5pt);
	\draw [fill=qqqqff] (0.,-1.) circle (1.5pt);
	\draw [fill=qqqqff] (-1.,0.) circle (1.5pt);
	\draw [color=qqqqff] (-0.76,0.78) circle (1.5pt);
	\draw [fill=qqqqff] (7.,0.) circle (1.5pt);
	\draw [fill=qqqqff] (6.,0.) circle (1.5pt);
	\draw [fill=qqqqff] (8.,0.) circle (1.5pt);
	\draw [fill=qqqqff] (7.,1.) circle (1.5pt);
	\draw [fill=qqqqff] (7.,-1.) circle (1.5pt);
	\draw [color=qqqqff] (6.3,0.7) circle (1.5pt);
	\end{scriptsize}
	\end{tikzpicture}
\caption{ZFS and CZFS of star graph $S_6$}
	\end{figure}
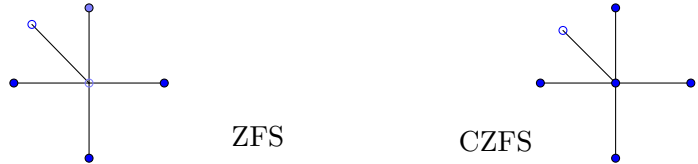
	\vspace{-0.25cm}
	From definition, we can easily observe that these zero forcing sets and 
	connected zero forcing sets are minimum. Thus, we have the following proposition
	for the star graphs and other well known graphs.
		\begin{prop}
				$ Z(K_{n})=Z_{c}(K_{n})=n-1; $
		 $ Z(P_{n})=Z_{c}(P_{n})=1; $
			 $Z(C_{n})=Z_{c}(C_{n})=2; $\\
			$Z(W_{n})=Z_{c}(W_{n})=3; $
			$Z(T_{n})=Z_{c}(T_{n})=n; $
		 $Z( S_{n})=n-2, \,  Z_{c}( S_{n})=n-1.$
		\end{prop}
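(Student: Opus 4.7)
The plan is to handle the six graph families separately, since they are independent and each reduces to a small combinatorial argument. For each family I need a lower bound on $Z(G)$ and a construction realising it; by Corollary 2.3, if the construction happens to be connected then the same number bounds $Z_{c}(G)$ from above, establishing both equalities at once. The only case where the two parameters differ is the star, which will require an extra argument.

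For $K_{n}$, every vertex has every other vertex as a neighbor, so if two white vertices remain then no black vertex can force, giving $Z(K_{n})\geq n-1$; any $n-1$ vertices form a $K_{n-1}$, hence a connected ZFS. For $P_{n}$, the single endpoint $v_{1}$ forces $v_{2}$, then $v_{2}$ forces $v_{3}$, etc., and a singleton is trivially connected. For $C_{n}$, one black vertex has two white neighbors, so $Z(C_{n})\geq 2$; two adjacent vertices $v_{1},v_{2}$ form a connected ZFS that propagates around the cycle.

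For $W_{n}$, a singleton cannot force (the hub has $n-1$ neighbors, each rim vertex has three), and a short case check rules out every pair: any rim vertex always retains at least two white neighbors when the second black is the hub or another rim vertex, and the hub retains $n-2$ or $n-3$ white neighbors. Thus $Z(W_{n})\geq 3$. Taking the hub together with two adjacent rim vertices $v_{1},v_{2}$ gives a connected set of size $3$; then $v_{1}\to v_{n-1}$ and $v_{2}\to v_{3}$, after which the forcing sweeps around the rim. For $S_{n}=K_{1,n-1}$, coloring $n-2$ leaves black lets one leaf force the center (its only neighbor) and then the center forces the remaining leaf, while the center has degree $n-1$ so fewer blacks leave at least two whites adjacent to it, proving $Z(S_{n})=n-2$. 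Since leaves are pairwise non-adjacent, every connected set of size $\geq 2$ must contain the center; and the center forces only when exactly one of its $n-1$ neighbors is white, so $Z_{c}(S_{n})\geq n-1$, realised by the center with any $n-2$ leaves.

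The case $T_{n}$ is the main obstacle. The natural construction is to take the $n$ vertices of one side of the triangular grid as the black set; a row-by-row induction shows that each interior vertex in the next row becomes the unique white neighbor of the black vertex directly ``behind'' it, so the coloring propagates down the entire grid, and the side is a path, hence connected. The hard direction is $Z(T_{n})\geq n$: I would argue via the standard maximum-nullity lower bound $M(G)\leq Z(G)$ together with a nullity computation for the supertriangle, or equivalently by a direct argument tracking the ``forcing chains'' and showing that any zero forcing set must meet each of the $n$ parallel lines of vertices in one direction of the grid. This structural lower bound for $T_{n}$ is where the real work lies; everything else is bookkeeping.
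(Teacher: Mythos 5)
Your case-by-case argument is sound for five of the six families, and it is considerably more than the paper itself offers: the paper gives no proof of this proposition at all, merely asserting after Figure 1 that ``from definition, we can easily observe that these zero forcing sets and connected zero forcing sets are minimum.'' Your lower bounds for $K_n$, $C_n$, $W_n$ and $S_n$, the propagation arguments for the matching constructions, and in particular the observation that any connected set of size at least two in $S_n$ must contain the center (forcing $Z_c(S_n)=n-1$) are exactly the right elementary arguments, and invoking Corollary 2.3 so that a connected optimal ZFS settles both parameters at once is the efficient way to organize the proof.

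The one genuine gap is the inequality $Z(T_n)\geq n$, which you correctly identify as the only nontrivial step and then leave unproved: ``I would argue via the standard maximum-nullity lower bound \ldots or equivalently by a direct argument tracking the forcing chains'' is a plan, not an argument, and neither route is carried out. As written, the $T_n$ clause of the proposition is only established as $Z(T_n)\leq Z_c(T_n)\leq n$. The cleanest repair is a citation: the equality $M(T_n)=Z(T_n)=n$ is established in the AIM minimum-rank paper (reference \cite{1} of this paper), and combined with your side-of-the-triangle construction this closes both equalities. If you prefer a self-contained argument, the forcing-chain count needs to be made precise: any zero forcing process partitions $V(T_n)$ into $|B|$ chains, each of which is an induced path, and one must show that $\binom{n+1}{2}$ vertices of $T_n$ cannot be covered by fewer than $n$ such chains arising from a valid chronological list of forces --- that is a real combinatorial lemma, not bookkeeping, so do not leave it implicit.
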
	
\begin{thm}
$Z(K_{n_{1}, n_{2}, \cdots, n_{t}})=Z_{c}(K_{n_{1},
n_{2}, \cdots, n_{t}})=n_{1}+ n_{2}
+ \cdots+ n_{t}-2.$
\end{thm}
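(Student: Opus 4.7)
The plan is to prove the two inequalities $Z(K_{n_1,\ldots,n_t})\ge n_1+\cdots+n_t-2$ and $Z_c(K_{n_1,\ldots,n_t})\le n_1+\cdots+n_t-2$ separately; combined with the trivial bound $Z\le Z_c$, this gives the chain of equalities. Throughout I write $V_1,\ldots,V_t$ for the parts and $n=n_1+\cdots+n_t$.

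For the upper bound I would construct an explicit connected zero forcing set of size $n-2$. Choose indices $i\ne j$ with $n_i\ge 2$ (the interesting cases all allow this; otherwise the graph is either a complete graph or a star, which have already been handled in the preceding proposition). Pick any $u\in V_i$ and $v\in V_j$ and set $B=V\setminus\{u,v\}$. Connectedness of the induced subgraph $G[B]$ is easy because $B$ still meets at least two different parts and every two vertices in different parts of a complete multipartite graph are adjacent. To check that $B$ is a zero forcing set, take any $u'\in V_i\setminus\{u\}$; its only white neighbour is $v$, so $u'\to v$. Once $v$ is coloured, $v$ itself has only $u$ as white neighbour and forces $u\to v$ to complete the process. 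Hence $Z_c(K_{n_1,\ldots,n_t})\le n-2$.

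For the lower bound I would argue by contradiction: assume there is a zero forcing set $B$ with $|B|\le n-3$, so the white set $W=V\setminus B$ has $|W|\ge 3$. The strategy is to show that no vertex can ever force, by a short case analysis on the distribution of $W$ among the parts. If $W\subseteq V_k$ for a single part, no black vertex has any white neighbour outside $V_k$ while black vertices outside $V_k$ see every white vertex, so nothing forces. If $W$ splits as $(|W\cap V_k|,|W\cap V_l|)=(a,b)$ with $a\ge 2$ and $b\ge 1$, then every black vertex outside $V_k$ sees $\ge 2$ whites in $V_k$ and every black vertex in $V_k$ sees $\ge 1$ white; after the potential single force in $V_l$ we are left with $\ge 2$ whites all in $V_k$, returning us to the first (stuck) case. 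Finally, if $W$ meets at least three distinct parts, every black vertex sees at least two whites (the two in parts different from its own), so no force is ever possible. In every scenario the derived colouring of $B$ fails to blacken $V$, contradicting $B$ being a zero forcing set. Therefore $Z(K_{n_1,\ldots,n_t})\ge n-2$.

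The only real obstacle is the bookkeeping in the lower-bound case analysis; the verification that the partial forcing in the $(a,b)$ case gets stuck requires one extra step beyond simply checking that no first force is possible. Combining everything,
\[
n-2 \;\le\; Z(K_{n_1,\ldots,n_t}) \;\le\; Z_c(K_{n_1,\ldots,n_t}) \;\le\; n-2,
\]
so equality holds throughout, completing the proof.
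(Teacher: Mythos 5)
Your proof is correct and rests on essentially the same idea as the paper's (at most two vertices can be left white, and one white vertex in each of two different parts suffices), but you execute it more carefully: you prove both bounds explicitly, verify connectedness, and rightly set aside the degenerate cases (all parts singletons, i.e.\ $K_t$, and the star $K_{1,m}$) for which the stated formula actually fails and which the paper's one-paragraph argument silently ignores. The only nit is the typo ``forces $u\to v$'' where you mean $v\to u$.
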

\begin{proof}
Denote the parts of graph by $U_1$, $U_2\dots, U_t$. 
Let $v_1\in U_1$. If this vertex is black and has $n_2+n_3+\dots+n_t-1$
black neighbors, then it can change the color of another vertex. But the color of vertices $U_1-\{v_1\}$
can be changed by a vertex of another part for example $v_2\in U_2$. 
For this aim, all neighbors of $v_2$ must be black, except one of them. Therefore
$U_1-\{v_1\}$ at most has one white vertex that is,
$Z(K_{n_{1}, n_{2}, \cdots, n_{t}})= Z_{c}(K_{n_{1},
	n_{2}, \cdots, n_{t}})=n_{1}+ n_{2}
+ \cdots+ n_{t}-2.$
\end{proof}

Now, we find a bound for connected zero forcing number of different product 
of two graphs. First the strong product of a cycle and a path, we give a simple example.
The general case can be found by a similar way.  
\begin{example}~\label{20}
	In the following figure, we see a CZFS of size $n+2m-2=5+6-2=9$,
	for $C_{n} \boxtimes P_{m}, n=5$ and $m=3$. 
	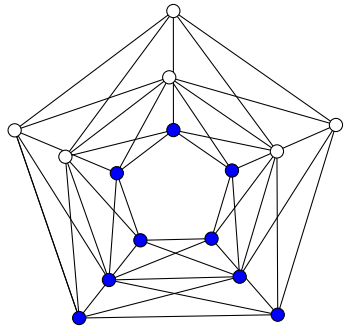
\begin{figure}[H]\label{6}
		\hspace{4.5cm}
		\definecolor{ffffff}{rgb}{1.,1.,1.}
		\definecolor{qqqqff}{rgb}{0.,0.,1.}
		\begin{tikzpicture}[line cap=round,line join=round,>=triangle 45,x=1.0cm,y=1.0cm]
		\clip(-0.9,1.4) rectangle (4.7,6.2);
		\fill[color=ffffff,fill=ffffff,fill opacity=0.10000000149011612] (1.338,2.99) -- (2.284,3.012) -- (2.555406833320207,3.918497838291464) -- (1.7771454810910725,4.456744313083895) -- (1.0247466799628067,3.882901090538967) -- cycle;
		\fill[color=ffffff,fill=ffffff,fill opacity=0.10000000149011612] (0.92,2.462) -- (2.658,2.506) -- (3.153225049506672,4.172532973073474) -- (1.721290962182145,5.158506993805295) -- (0.34108197705935517,4.101339477568479) -- cycle;
		\fill[color=ffffff,fill=ffffff,fill opacity=0.10000000149011612] (0.524,1.956) -- (3.164,2.) -- (3.9379583784328744,4.524385950771703) -- (1.776290962182145,6.040542269071334) -- (-0.3336513518668476,4.453192455266708) -- cycle;
		\draw [color=ffffff] (1.338,2.99)-- (2.284,3.012);
		\draw [color=ffffff] (2.284,3.012)-- (2.555406833320207,3.918497838291464);
		\draw [color=ffffff] (2.555406833320207,3.918497838291464)-- (1.7771454810910725,4.456744313083895);
		\draw [color=ffffff] (1.7771454810910725,4.456744313083895)-- (1.0247466799628067,3.882901090538967);
		\draw [color=ffffff] (1.0247466799628067,3.882901090538967)-- (1.338,2.99);
		\draw [color=ffffff] (0.92,2.462)-- (2.658,2.506);
		\draw [color=ffffff] (2.658,2.506)-- (3.153225049506672,4.172532973073474);
		\draw [color=ffffff] (3.153225049506672,4.172532973073474)-- (1.721290962182145,5.158506993805295);
		\draw [color=ffffff] (1.721290962182145,5.158506993805295)-- (0.34108197705935517,4.101339477568479);
		\draw [color=ffffff] (0.34108197705935517,4.101339477568479)-- (0.92,2.462);
		\draw [color=ffffff] (0.524,1.956)-- (3.164,2.);
		\draw [color=ffffff] (3.164,2.)-- (3.9379583784328744,4.524385950771703);
		\draw [color=ffffff] (3.9379583784328744,4.524385950771703)-- (1.776290962182145,6.040542269071334);
		\draw [color=ffffff] (1.776290962182145,6.040542269071334)-- (-0.3336513518668476,4.453192455266708);
		\draw [color=ffffff] (-0.3336513518668476,4.453192455266708)-- (0.524,1.956);
		\draw (1.338,2.99)-- (2.658,2.506);
		\draw (2.284,3.012)-- (0.92,2.462);
		\draw (1.338,2.99)-- (0.524,1.956);
		\draw (2.284,3.012)-- (3.164,2.);
		\draw (2.658,2.506)-- (0.524,1.956);
		\draw (0.92,2.462)-- (3.164,2.);
		\draw (2.555406833320207,3.918497838291464)-- (2.658,2.506);
		\draw (2.284,3.012)-- (3.153225049506672,4.172532973073474);
		\draw (3.153225049506672,4.172532973073474)-- (3.164,2.);
		\draw (2.658,2.506)-- (3.9379583784328744,4.524385950771703);
		\draw (2.555406833320207,3.918497838291464)-- (3.9379583784328744,4.524385950771703);
		\draw (1.776290962182145,6.040542269071334)-- (1.7771454810910725,4.456744313083895);
		\draw (1.0247466799628067,3.882901090538967)-- (-0.3336513518668476,4.453192455266708);
		\draw (0.34108197705935517,4.101339477568479)-- (1.338,2.99);
		\draw (1.0247466799628067,3.882901090538967)-- (0.92,2.462);
		\draw (0.34108197705935517,4.101339477568479)-- (0.524,1.956);
		\draw (-0.3336513518668476,4.453192455266708)-- (0.524,1.956);
		\draw (-0.3336513518668476,4.453192455266708)-- (0.92,2.462);
		\draw (1.721290962182145,5.158506993805295)-- (1.0247466799628067,3.882901090538967);
		\draw (0.34108197705935517,4.101339477568479)-- (1.7771454810910725,4.456744313083895);
		\draw (1.776290962182145,6.040542269071334)-- (0.34108197705935517,4.101339477568479);
		\draw (-0.3336513518668476,4.453192455266708)-- (1.721290962182145,5.158506993805295);
		\draw (1.721290962182145,5.158506993805295)-- (2.555406833320207,3.918497838291464);
		\draw (1.7771454810910725,4.456744313083895)-- (3.153225049506672,4.172532973073474);
		\draw (1.776290962182145,6.040542269071334)-- (3.153225049506672,4.172532973073474);
		\draw (1.721290962182145,5.158506993805295)-- (3.9379583784328744,4.524385950771703);
		\draw (1.776290962182145,6.040542269071334)-- (-0.3336513518668476,4.453192455266708);
		\draw (1.721290962182145,5.158506993805295)-- (0.34108197705935517,4.101339477568479);
		\draw (1.7771454810910725,4.456744313083895)-- (1.0247466799628067,3.882901090538967);
		\draw (1.7771454810910725,4.456744313083895)-- (2.555406833320207,3.918497838291464);
		\draw (1.721290962182145,5.158506993805295)-- (3.153225049506672,4.172532973073474);
		\draw (1.776290962182145,6.040542269071334)-- (3.9379583784328744,4.524385950771703);
		\draw (3.153225049506672,4.172532973073474)-- (2.658,2.506);
		\draw (2.555406833320207,3.918497838291464)-- (2.284,3.012);
		\draw (3.9379583784328744,4.524385950771703)-- (3.164,2.);
		\draw (3.164,2.)-- (0.524,1.956);
		\draw (2.658,2.506)-- (0.92,2.462);
		\draw (2.284,3.012)-- (1.338,2.99);
		\draw (1.338,2.99)-- (1.0247466799628067,3.882901090538967);
		\draw (0.92,2.462)-- (0.34108197705935517,4.101339477568479);
		\draw (0.524,1.956)-- (-0.3336513518668476,4.453192455266708);
		\begin{scriptsize}
		\draw [fill=qqqqff] (1.338,2.99) circle (2.5pt);
		\draw [fill=qqqqff] (2.284,3.012) circle (2.5pt);
		\draw [fill=qqqqff] (2.555406833320207,3.918497838291464) circle (2.5pt);
		\draw [fill=qqqqff] (1.7771454810910725,4.456744313083895) circle (2.5pt);
		\draw [fill=qqqqff] (1.0247466799628067,3.882901090538967) circle (2.5pt);
		\draw [fill=qqqqff] (0.92,2.462) circle (2.5pt);
		\draw [fill=qqqqff] (2.658,2.506) circle (2.5pt);
		\draw [fill=ffffff] (3.153225049506672,4.172532973073474) circle (2.5pt);
		\draw [fill=ffffff] (1.721290962182145,5.158506993805295) circle (2.5pt);
		\draw [fill=ffffff] (0.34108197705935517,4.101339477568479) circle (2.5pt);
		\draw [fill=qqqqff] (0.524,1.956) circle (2.5pt);
		\draw [fill=qqqqff] (3.164,2.) circle (2.5pt);
		\draw [fill=ffffff] (3.9379583784328744,4.524385950771703) circle (2.5pt);
		\draw [fill=ffffff] (1.776290962182145,6.040542269071334) circle (2.5pt);
		\draw [fill=ffffff] (-0.3336513518668476,4.453192455266708) circle (2.5pt);
		\end{scriptsize}
		\end{tikzpicture}
		\caption{$C_{n} \boxtimes P_{m}, n=5$ and $m=3$}
	\end{figure}
\end{example}
\begin{thm}
	$Z (C_{n} \boxtimes P_{m})$ and $Z_{c} (C_{n} \boxtimes
	P_{m})  \leq n+2m-2.$
\end{thm}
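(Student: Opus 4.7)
Since every connected zero forcing set is a zero forcing set, one has $Z(G)\le Z_c(G)$ in general, so it suffices to exhibit a single connected zero forcing set of $C_n\boxtimes P_m$ of size $n+2m-2$. The configuration drawn in the preceding example generalizes directly, and I would take it as the candidate.

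\emph{Construction.} Label the vertices of $C_n\boxtimes P_m$ as ordered pairs $(c_i,p_j)$ with $i\in\{1,\dots,n\}$ taken cyclically and $j\in\{1,\dots,m\}$, and set
\[
B=\bigl(V(C_n)\times\{p_1\}\bigr)\,\cup\,\bigl(\{c_1,c_2\}\times V(P_m)\bigr).
\]
The two pieces of $B$ meet only in $(c_1,p_1),(c_2,p_1)$, giving $|B|=n+2(m-1)=n+2m-2$. Connectedness is immediate: the ``base cycle'' $V(C_n)\times\{p_1\}$ induces a copy of $C_n$, and each of the two ``adjacent columns'' $\{c_k\}\times V(P_m)$, $k\in\{1,2\}$, is a path attached to that cycle at one vertex.

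\emph{Forcing.} In the strong product each interior $(c_i,p_j)$ has exactly eight neighbors, namely $(c_{i\pm 1},p_j)$, $(c_i,p_{j\pm 1})$, and the four diagonals $(c_{i\pm 1},p_{j\pm 1})$. The corner vertex $(c_1,p_1)$ has only five neighbors, of which four lie in $B$, so $(c_1,p_1)\to(c_n,p_2)$; by symmetry $(c_2,p_1)\to(c_3,p_2)$. I would then prove by induction on $t\ge 1$ that at step $t$ precisely the $2t$ vertices
\[
(c_{t+2-s},\,p_{s+2})\quad\text{and}\quad(c_{n-t+1+s},\,p_{s+2})\qquad(s=0,1,\dots,t-1)
\]
turn black, each forced by a ``frontier'' vertex that acquired its penultimate black neighbor during step $t-1$. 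The two completely black columns together with the full bottom cycle are what drive the induction: any frontier vertex in the base cycle or in column $c_1$ or $c_2$ already has at most two white neighbors, and the step-$(t-1)$ force eliminates one of them.

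\emph{Main obstacle.} The actual work is the careful bookkeeping at the wavefront. Two classes of cases deserve separate attention: (i) vertices in the top row $j=m$, which have fewer than eight neighbors (no upward diagonal/path neighbors), and (ii) the moment when the two opposing diagonal waves meet on the side of the cycle opposite $c_1,c_2$, where one must check that no white vertex is stranded. To keep the case analysis tidy, I would schedule the forcings lexicographically in $(t,s)$ and verify that at the moment each scheduled force is executed the corresponding frontier vertex has a unique white neighbor, with the ``meeting'' situation being handled by observing that the final white vertices are each the unique white neighbor of a wavefront vertex in one of the two opposing fans.
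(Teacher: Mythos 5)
Your construction (the full bottom cycle $V(C_n)\times\{p_1\}$ together with the two adjacent columns $\{c_1,c_2\}\times V(P_m)$, of size $n+2(m-1)=n+2m-2$) is exactly the generalization of the configuration the paper draws for $C_5\boxtimes P_3$, and the paper itself offers no further proof beyond that figure and the remark that ``the general case can be found by a similar way''; your diagonal-wavefront forcing schedule is correct (I checked it reproduces the $n=5,m=3$ propagation), so this is the same approach, carried out in more detail than the paper provides.
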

\begin{thm}
	\begin{itemize}
		\item $Z_{c}(G\square P_{t})= Z(G\square P_{t})=|G|.$
		\item $Z_{c}(G\square H)\leq min\{Z_{c}(G)|H|, Z_{c}(H)|G|\}$.
	\end{itemize}		
\end{thm}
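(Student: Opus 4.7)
My plan is to split both claims into upper and lower bounds and exploit a common ``copy/lift'' construction for the upper bounds; the lower bound of the first item is where the real work lies.

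For the upper bound in item 1, I would produce an explicit connected zero forcing set of size $|G|$. Let $B_0 = V(G) \times \{1\}$ be the first ``slice'' of $G\square P_t$. Assuming $G$ is connected, the subgraph induced by $B_0$ is isomorphic to $G$, so $B_0$ is connected, and $|B_0|=|G|$. To check forcing, I would observe that for each $v \in V(G)$ the neighbors of $(v,1)$ in $G\square P_t$ are $\{(u,1) : u \sim_G v\} \cup \{(v,2)\}$; the first set sits inside $B_0$, so $(v,2)$ is the unique white neighbor and $(v,1)\to (v,2)$. After this simultaneous first wave $V(G)\times\{1,2\}$ is black, and the identical argument applied inductively at slice $i$ shows $(v,i)\to(v,i+1)$ for every $v$, blackening the entire graph after $t-1$ steps. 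Hence $Z_c(G\square P_t)\le Z(G\square P_t)\le |G|$.

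The matching lower bound $Z(G\square P_t)\ge |G|$ is the main obstacle. The approach I would attempt is via the column projection $\pi\colon V(G\square P_t)\to V(G)$ with $\pi(v,i)=v$, combined with a forcing-chain analysis: a zero forcing set $B$ induces a partition of $V(G\square P_t)$ into $|B|$ vertex-disjoint forcing chains (each a path in $G\square P_t$). I would argue that if some column $\{v^\ast\}\times V(P_t)$ contains no element of $B$, then the forcing chains covering that column must ``enter'' it from a horizontal edge, and by tracking the final vertex of each chain I would aim for a contradiction that forces $|\pi(B)|=|V(G)|$, hence $|B|\ge |G|$. This is the delicate step of the proof, and it is where care with boundary slices (in particular slice $t$) and chain endpoints would be required.

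For item 2 the argument is by direct lift and symmetry. By the symmetry $G\square H \cong H\square G$ it suffices to prove $Z_c(G\square H)\le Z_c(G)\,|H|$. Let $B_G$ be a minimum connected zero forcing set of $G$ and put $B=B_G\times V(H)$, a set of size $Z_c(G)\,|H|$. The induced subgraph on $B$ is the Cartesian product $B_G\square H$, which is connected because $B_G$ is connected in $G$ and $H$ is connected; so $B$ is connected in $G\square H$. To see $B$ is a zero forcing set, fix a layer $V(G)\times\{h\}$ and note that every vertex $(v,h)$ with $v\in B_G$ has \emph{all} of its cross-layer neighbors $(v,h')$ already in $B$, since $v\in B_G$. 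Consequently the forcing process restricted to layer $h$ mimics step by step the forcing process on $G$ starting from $B_G$, and so blackens the whole layer. Doing this for every $h\in V(H)$ blackens $V(G\square H)$, giving the desired bound, and the symmetric construction $V(G)\times B_H$ gives the other half of the minimum.
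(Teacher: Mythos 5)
Your ``lift'' constructions for the upper bounds are fine and are the standard arguments; the paper itself offers no argument at all here (its proof reads ``resulted from definitions''), so there is nothing more detailed to compare against. Two caveats on what you did write. First, in item 2 your justification that the forcing in layer $V(G)\times\{h\}$ mimics the forcing in $G$ only checks the cross-layer neighbors of the \emph{initial} vertices $(v,h)$ with $v\in B_G$; when a vertex $u\notin B_G$ that was forced in mid-process is about to force its successor, you also need its copies $(u,h')$ in the other layers to be black. This is repaired by performing the forces synchronously in all layers (so that at every stage the set of black vertices is of the form $S\times V(H)$), but it does need to be said. Second, you silently assume $G$ (and $H$) connected; under the paper's definition of a CZFS (connected within each component) the constructions still go through componentwise, but this should be noted.

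The genuine gap is the lower bound $Z(G\square P_t)\ge |G|$ in item 1, which you correctly flag as ``the main obstacle'' but only sketch. This step cannot be completed, because the claimed equality is false as stated: it is known (and easy to check) that $Z(P_n\square P_t)=\min\{n,t\}$, so for $G=P_5$ and $t=2$ one has $Z(G\square P_2)=2<5=|G|$; indeed the two end-vertices of a short side of the $5\times 2$ grid form a connected zero forcing set, so $Z_c(G\square P_2)=2$ as well. Your proposed forcing-chain/projection argument would have to break down exactly here: a chain may travel ``horizontally'' through several columns, so $|\pi(B)|$ need not equal $|V(G)|$. The correct general statement is only the inequality $Z_c(G\square P_t)\le |G|$ (your upper bound), or the equality under an additional hypothesis guaranteeing $|G|\le Z(G\square P_t)$ (for instance $t$ sufficiently large relative to $G$). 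So the theorem itself needs amending before any proof of item 1 as an equality can succeed.
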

\begin{proof}
	The proof is resulted from definitions.
\end{proof}
\begin{thm}
	$Z (P_{n} \boxtimes P_{m})= Z_{c} (P_{n} \boxtimes P_{m})
	\leq n+m-1.$
\end{thm}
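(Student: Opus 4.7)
The plan is to exhibit an explicit connected zero forcing set $B$ of cardinality $n+m-1$, which immediately gives $Z_c(P_n \boxtimes P_m) \leq n+m-1$ and, combined with the general inequality $Z \leq Z_c$, the stated bound for $Z$ as well (the asserted equality then reduces to observing that the exhibited set is minimum in both senses).

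Label the vertices of $P_n \boxtimes P_m$ as pairs $(i,j)$ with $1 \leq i \leq n$ and $1 \leq j \leq m$, and set
\[
B = \{(1,j) : 1 \leq j \leq m\} \cup \{(i,1) : 1 \leq i \leq n\}.
\]
This ``L-shape'' satisfies $|B| = n+m-1$ (the corner $(1,1)$ is counted once), and it is connected in $P_n \boxtimes P_m$ since the two arms already form connected subpaths in the underlying Cartesian product meeting at $(1,1)$.

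Next I would verify that $B$ forces the whole graph. The initial force is $(1,1) \to (2,2)$: in the strong product, the only neighbors of $(1,1)$ are $(1,2),(2,1),(2,2)$, the first two of which lie in $B$. I would then proceed by induction on the diagonal index $d = i+j$, establishing the following claim: after sufficiently many steps, every vertex $(i,j)$ with $i+j \leq d$ is black. For the inductive step, fix $(i,j)$ with $i,j \geq 2$ and $i+j = d$, and choose a forcer as follows:
\begin{itemize}
\item if $i = 2$, the forcer is $(1, j-1)$;
\item if $j = 2$, the forcer is $(i-1, 1)$;
\item if $i, j \geq 3$, the forcer is $(i-1, j-1)$.
\end{itemize}
In each case, the (up to) eight neighbors of the forcer in the $3\times 3$ block around it either lie in $B$ (whenever a coordinate is $1$) or satisfy $i' + j' \leq d - 1$ with $i', j' \geq 2$ (hence are black by the inductive hypothesis), with the single exception of the target $(i,j)$ itself. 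The color-change rule therefore applies, and the wave of forces sweeps along successive anti-diagonals until $(n,m)$ is reached.

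The main obstacle is the neighborhood bookkeeping: because the strong product places up to eight neighbors around each interior vertex, one must carefully check that \emph{every} one of them is black at the moment the force happens. Splitting into the three cases above, and noting that the ``edge'' neighbors (those with a coordinate equal to $1$) are automatically in $B$, keeps the case analysis manageable. Once this verification is complete, $B$ is a connected zero forcing set of size $n+m-1$, which proves the theorem.
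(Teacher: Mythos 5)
Your proposal is correct and takes essentially the same approach as the paper: both exhibit the L-shaped set consisting of the first row and first column, of size $n+m-1$, as a connected zero forcing set of the grid $P_n \boxtimes P_m$. The paper merely asserts that this set forces (citing the known bound on $Z(P_n \boxtimes P_m)$), whereas you carry out the anti-diagonal induction verifying the color-change rule; like the paper, you leave the equality $Z = Z_c$ resting on the known value of $Z(P_n \boxtimes P_m)$ rather than proving a matching lower bound.
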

\begin{proof}
	The graph $P_n \boxtimes P_m$ is a $n\times m$ grid.
	We know that $n+m-1\geq Z (P_{n} \boxtimes P_{m})$~\cite{1}. Now consider the CZFS 
	contains all vertices of the first column and the first row of this grid.
	This set is connected and ZFS.
	The size of this set is $n+m-1$.
\end{proof}
	\begin{thm}
Let $ G $ be a connected graph with $ \vert G \vert=n. $  Then
$$  Z_{c}(G\langle H_{1}, H_{2}, \ldots, H_{n}\rangle) \leq
\vert G \vert + \sum_{i=1}^{n} Z(H_{i}). $$
In particular, if $H_i$'s are connected and $ \vert H_{i} \vert > 1$ for all $ i, $ then
$$  Z_{c}(G\langle H_{1}, H_{2}, \ldots, H_{n}\rangle) = \vert G
\vert + \sum_{i=1}^{n} Z(H_{i}). $$
\end{thm}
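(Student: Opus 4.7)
The plan is to establish an upper bound by explicit construction and then prove the matching lower bound via case analysis.

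For the upper bound, let $Z_i$ be a minimum zero forcing set of $H_i$ and set $S=V(G)\cup\bigcup_{i=1}^n Z_i$, so that $|S|=|G|+\sum_{i=1}^n Z(H_i)$. Connectivity of the induced subgraph on $S$ is immediate: $V(G)$ is connected in $G$, and each $Z_i\subseteq V(H_i)$ is joined to $g_i\in V(G)$ by the definition of the generalized corona. To see $S$ is a zero forcing set, note that the only neighbor of any $v\in V(H_i)$ outside $V(H_i)$ is $g_i\in V(G)\subseteq S$, which is already black; hence the color-change rule restricted to $V(H_i)$ reduces to the rule in the standalone $H_i$, and $Z_i$ forces all of $V(H_i)$.

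For the equality, assume each $H_i$ is connected with $|H_i|>1$, and let $B$ be any connected zero forcing set of the corona; write $B_i=B\cap V(H_i)$. I would first establish $V(G)\subseteq B$. Since $g_i$ is the only vertex of the corona joining $V(H_i)$ to the rest, if $g_i\notin B$ then connectivity of $B$ forces either $B\cap V(H_i)=\emptyset$ or $B\subseteq V(H_i)$. In the former case, even after $g_i$ is eventually forced, $g_i$ still has $|V(H_i)|\ge 2$ white neighbors in $V(H_i)$ and no vertex of $V(H_i)$ has any black neighbor other than $g_i$, so $V(H_i)$ stays white. In the latter case (using $|G|\ge 2$), propagation reaches at most $V(H_i)\cup\{g_i\}$, and then at most one additional $g_j$ if $|N_G(g_i)|=1$, after which $g_j$'s $|V(H_j)|\ge 2$ white neighbors block all further progress. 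Both cases contradict $B$ being a zero forcing set, so $V(G)\subseteq B$.

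Next I claim $|B_i|\ge Z(H_i)$ for every $i$. Since $V(G)\subseteq B$ is entirely black from the outset, the color-change rule restricted to $V(H_i)$ agrees with the rule in standalone $H_i$, so $V(H_i)$-internal forcings reach exactly the $H_i$-closure $X$ of $B_i$. The only other propagation into $V(H_i)$ is a single forcing by $g_i$, which requires $|V(H_i)\setminus X|=1$; if instead $|V(H_i)\setminus X|\ge 2$, the process stalls in $V(H_i)$ and $B$ is not a ZFS. But if $V(H_i)\setminus X=\{v\}$, then the $H_i$-neighbors of $v$ (nonempty because $H_i$ is connected with $|H_i|>1$) lie in $X$, are black, and have $v$ as their unique white neighbor, so they would force $v$ into $X$, contradicting $v\notin X$. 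Therefore $X=V(H_i)$, so $B_i$ is a zero forcing set of $H_i$ and $|B_i|\ge Z(H_i)$. Summing gives $|B|\ge|G|+\sum_{i=1}^n Z(H_i)$, matching the upper bound. The main obstacle is the step $V(G)\subseteq B$, which requires invoking both the connectivity of $B$ and the assumptions $|H_i|>1$ and $H_i$ connected in a careful case analysis, in order to preclude a CZFS that hides inside one attached copy $V(H_i)$ and attempts to propagate out through $g_i$.
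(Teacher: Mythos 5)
Your argument follows essentially the same route as the paper's: the same witness set $V(G)\cup Z_1\cup\cdots\cup Z_n$ for the upper bound, the same connectivity dichotomy (if $g_i\notin B$ then $B\cap V(H_i)=\emptyset$ or $B\subseteq V(H_i)$) to show $V(G)\subseteq B$, and the same observation that once $g_i$ is black the forcing inside $V(H_i)$ coincides with forcing in the standalone $H_i$, so that $B\cap V(H_i)$ must be a zero forcing set of $H_i$. Your closure argument ruling out the possibility $|V(H_i)\setminus X|=1$ is in fact tighter than the paper's corresponding step (which replaces a force $v_i\rightarrow u$ by a force $u_0\rightarrow u$ from inside $H_i$), and for $|G|\ge 2$ the whole proof is sound.

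The one genuine gap is the case $n=|G|=1$, which the hypotheses permit and which you dispose of only by writing ``(using $|G|\ge 2$)'' without ever returning to it. For $|G|=1$ your key intermediate claim $V(G)\subseteq B$ is simply false: with $G=K_1$ and $H_1=P_2$ the generalized corona is $K_3$, and the two vertices of $H_1$ form a minimum connected zero forcing set avoiding $V(G)$; likewise $G=K_1$, $H_1=C_4$ gives $W_5$, where three consecutive rim vertices are a minimum CZFS. The equality itself still holds in these examples, but when $B\subseteq V(H_1)$ it needs a separate argument: while $g_1$ is white it is a white neighbor of every vertex of $H_1$, so no force internal to $H_1$ can occur before $g_1$ is forced, which requires some $u\in B$ with $N_{H_1}[u]\subseteq B$; one must then show that a zero forcing set of $H_1$ containing a closed neighborhood has size at least $Z(H_1)+1$. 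The paper's own proof at least isolates this branch (concluding ``$|G|=1$ and $H_i$ complete''), so you should either supply that analysis or state the lower bound only for $n\ge 2$.
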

\begin{proof}
Let $ Z_{i} $ be a minimum zero forcing set of $ H_{i}$ for all $1\leq i\leq n$.
It is easy, to see that 
$V(G) \cup Z_{1} \cup \ldots \cup Z_{n} $ is a ZFS for $ G \langle
H_{1}, H_{2}, \ldots, H_{n} \rangle$. Also this set is connected, because 
the graph $G$ is connected. Thus
$$  Z_{c}(G\langle H_{1}, H_{2}, \ldots, H_{n}\rangle) \leq \vert G \vert +
\sum_{i=1}^{n} Z(H_{i}). $$
Now, we claim that if $ \vert H_{i}
\vert > 1 $ for all $ i, $ then this set is minimum. First notice
that every CZFS contains all vertices of $ G. $ Because if a vertex of G such as $v_i$ is initially white,
then by connectedness of zero forcing set, all of the vertices of $H_i$ must be white or the CZFS is a subset of $V (G)$.
In the first case, forcing of vertices of $H_i$ should be begin with a force by $v_i$, which is not possible where $|H_i|>1$.
In the second case, there exists an vertex $v_i$ such that it is the white neighbor of each vertex in $H_i$.
All vertices in $H_i$
should be black and the rest of graph is a path. Thus, it easily follows that $|G|=1$ and $H_i$ is complete graph.
So the result follows.\\
Now, if $v_i\rightarrow u$ is a force on this graph, it means that $u$ is the only
white vertex in $H_i$. The graph $H_i$ is connected, so there exists another vertex
$u_0\in H_i$ which is a neighbor of $u$. Thus we can replace $v_i\rightarrow u$ by $u_0\rightarrow u$.
Therefore, all the forces are in $H_i$. So, for every minimum CZFS $A$ of
$G\langle H_{1}, H_{2}, \ldots, H_{n}\rangle$, $A\cap V (H_i)$ is a ZFS for $H_i$. Now, it follows that
$V (G) \cup Z_1 \cup\cdots \cup Z_n$ is minimum.
\end{proof}
\begin{cor}
For any graph $ G$ and $ H, $ we have
\begin{itemize}
\item $ Z(G \circ H) \leq  Z(G) + \vert G \vert Z(H). $
\item $ Z_c(G \circ H) \leq  Z_c(G) + \vert G \vert Z(H).$
\end{itemize}
\end{cor}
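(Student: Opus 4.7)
Observe that the corona $G \circ H$ is precisely the generalized corona
$G \langle H, H, \ldots, H \rangle$ with $|G|$ copies of $H$. Substituting $H_i = H$ for every $i$ into the previous theorem immediately yields
$$Z_c(G \circ H) \;\le\; |G| + |G|\,Z(H),$$
and hence also $Z(G\circ H) \le |G| + |G|\,Z(H)$. To obtain the sharper bounds stated in the corollary I would instead build (connected) zero forcing sets of the claimed sizes explicitly, rather than invoking the previous theorem as a black box.

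For the first item, let $S$ be a minimum ZFS of $G$ and, for $1\le i\le |G|$, let $Z_i$ be a minimum ZFS of the $i$th copy $H_i$ of $H$; set $B = S \cup \bigcup_i Z_i$, so $|B|=Z(G)+|G|Z(H)$. I would verify that $B$ forces $G\circ H$ by an interleaved cascade. For every $v_i\in S$ the vertices of $Z_i$ see $v_i$ as an extra already-black neighbour in $G\circ H$, so $Z_i$ forces $H_i$ exactly as it does in $H_i$ alone. After this stage each such $v_i$ has no white neighbour in $H_i$, so its forcing behaviour in $G\circ H$ coincides with its behaviour in $G$; hence $S$ eventually colours all of $G$. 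As each newly blackened vertex $v_j$ appears, the set $Z_j$ gains an extra black neighbour and in turn forces $H_j$, completing the cascade.

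For the second item I would repeat this construction with $S$ a minimum CZFS of $G$; the forcing argument goes through word for word. The main obstacle and crux of the proof is connectedness of $B$ in $G\circ H$. Indeed $S$ is connected in $G$ (hence in $G\circ H$), and each $Z_i$ attaches to $v_i$ via the corona edges, so any index $i$ with $v_i\in S$ causes no trouble; but for indices with $v_i\notin S$ the piece $Z_i$ is separated from the rest of $B$, and verifying (or repairing) connectedness in that case is the delicate point on which the stated bound hinges.
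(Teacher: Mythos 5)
The paper offers no independent proof of this corollary; it is meant to fall out of the preceding theorem via the identification $G \circ H = G\langle H,\ldots,H\rangle$, and you are right that this only yields $|G| + |G|\,Z(H)$, which is weaker than both stated bounds whenever $Z(G) < |G|$ (resp.\ $Z_{c}(G) < |G|$). Your interleaved cascade for the first inequality is correct and is genuinely needed to reach the sharper bound $Z(G) + |G|\,Z(H)$: each $Z_i$ whose anchor $v_i$ is already black forces its copy $H_i$ without interference, and a vertex of $G$ performs its force from the $G$-forcing order only after its attached copy is entirely black, which your interleaving guarantees. This part goes beyond what the paper itself establishes, and the paper's own remark ($Z(C_n\circ P_m)=n+2$, $Z(P_n\circ C_m)=2n+1$) shows the bound is tight in examples.

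The connectedness obstruction you flag in the second item is not merely delicate --- it is fatal, because the second inequality is false as stated. By the equality case of the preceding theorem, if $H$ is connected with $|H|>1$ then every connected zero forcing set of $G\circ H$ must contain all of $V(G)$, so $Z_{c}(G\circ H) = |G| + |G|\,Z(H)$, which strictly exceeds $Z_{c}(G) + |G|\,Z(H)$ whenever $Z_{c}(G) < |G|$. The remark immediately following the corollary supplies a concrete instance: for $G=C_n$ and $H=P_m$ with $m>1$ it records $Z_{c}(G\circ H)=2n$, whereas the claimed bound would give $n+2 < 2n$ for $n\geq 3$. So no repair of the connectedness step can succeed; the correct second statement is $Z_{c}(G\circ H)\leq |G| + |G|\,Z(H)$, with equality in the connected, nontrivial case.
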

\begin{figure}[H]
	\vspace{-1.5cm}
	\hspace{1.5cm}
	\definecolor{sqsqsq}{rgb}{0.12549019607843137,0.12549019607843137,0.12549019607843137}
	\definecolor{ffffff}{rgb}{1.,1.,1.}
	\definecolor{qqqqff}{rgb}{0.,0.,1.}
	\begin{tikzpicture}[line cap=round,line join=round,>=triangle 45,x=1.0cm,y=1.0cm]
	\clip(-4.8,-5.58) rectangle (16.48,6.72);
	\fill[color=ffffff,fill=ffffff,fill opacity=1.0] (-2.,3.) -- (-1.,3.) -- (-0.6909830056250525,3.951056516295153) -- (-1.5,4.538841768587627) -- (-2.3090169943749475,3.9510565162951536) -- cycle;
	\fill[color=ffffff,fill=ffffff,fill opacity=1.0] (4.56,3.28) -- (4.86,2.74) -- (5.466275617111868,2.8584477779260746) -- (5.540974555037319,3.471652530576289) -- (4.980865420486898,3.7321861318510186) -- cycle;
	\fill[color=ffffff,fill=ffffff,fill opacity=1.0] (3.06,4.66) -- (2.48,4.62) -- (2.3388124039143374,4.056026540773814) -- (2.8315536707435047,3.7474717742191777) -- (3.277272117389275,4.120747900323809) -- cycle;
	\fill[color=ffffff,fill=ffffff,fill opacity=1.0] (1.42,3.34) -- (0.9,3.22) -- (0.8534379448804457,2.6883685722015276) -- (1.3446610122305143,2.479802280334435) -- (1.6948156190303902,2.882532650851514) -- cycle;
	\draw [color=ffffff] (-2.,3.)-- (-1.,3.);
	\draw [color=ffffff] (-1.,3.)-- (-0.6909830056250525,3.951056516295153);
	\draw [color=ffffff] (-0.6909830056250525,3.951056516295153)-- (-1.5,4.538841768587627);
	\draw [color=sqsqsq] (-1.4706666666666668,5.28)-- (-1.5,4.538841768587627);
	\draw [color=ffffff] (-2.3090169943749475,3.9510565162951536)-- (-2.,3.);
	\draw [color=sqsqsq] (-0.18,4.68)-- (0.,4.);
	\draw [color=sqsqsq] (-2.56,4.7)-- (-3.,4.);
	\draw [color=sqsqsq] (-2.74,2.66)-- (-2.04,2.3);
	\draw [color=sqsqsq] (-0.28,2.6)-- (-0.98,2.26);
	\draw [color=sqsqsq] (-2.56,4.7)-- (-2.3090169943749475,3.9510565162951536);
	\draw [color=sqsqsq] (-2.787852545348153,4.337507314218848)-- (-2.3090169943749475,3.9510565162951536);
	\draw [color=sqsqsq] (-3.,4.)-- (-2.3090169943749475,3.9510565162951536);
	\draw [color=sqsqsq] (-2.74,2.66)-- (-2.,3.);
	\draw [color=sqsqsq] (-2.,3.)-- (-2.04,2.3);
	\draw [color=sqsqsq] (-2.,3.)-- (-2.3744092963202066,2.4719819238218204);
	\draw [color=sqsqsq] (-1.,3.)-- (-0.98,2.26);
	\draw [color=sqsqsq] (-1.,3.)-- (-0.6022589167767503,2.4434742404227214);
	\draw [color=sqsqsq] (-1.,3.)-- (-0.28,2.6);
	\draw [color=sqsqsq] (-1.94,5.28)-- (-1.5,4.538841768587627);
	\draw [color=sqsqsq] (-0.18,4.68)-- (-0.6909830056250525,3.951056516295153);
	\draw [color=sqsqsq] (-0.18,4.68)-- (-0.6909830056250525,3.951056516295153);
	\draw [color=sqsqsq]  (-0.08672594987873877,4.327631366208569) -- (-0.6909830056250525,3.951056516295153);
	\draw [color=sqsqsq] (0.,4.)-- (-0.6909830056250525,3.951056516295153);
	\draw [color=sqsqsq] (-1.06,5.28)-- (-1.5,4.538841768587627);
	\draw [color=sqsqsq] (-1.94,5.28)-- (-1.06,5.28);
	\draw [color=sqsqsq] (-1.5,4.538841768587627)-- (-2.3090169943749475,3.9510565162951536);
	\draw [color=sqsqsq] (-2.3090169943749475,3.9510565162951536)-- (-2.,3.);
	\draw [color=sqsqsq] (-2.,3.)-- (-1.,3.);
	\draw [color=sqsqsq] (-1.,3.)-- (-0.6909830056250525,3.951056516295153);
	\draw [color=sqsqsq] (-0.6909830056250525,3.951056516295153)-- (-1.5,4.538841768587627);
	\draw [color=sqsqsq] (2.,3.)-- (3.,3.);
	\draw [color=sqsqsq] (3.,3.)-- (4.,3.);
	\draw [color=ffffff] (4.56,3.28)-- (4.86,2.74);
	\draw [color=ffffff] (4.86,2.74)-- (5.466275617111868,2.8584477779260746);
	\draw [color=ffffff] (5.466275617111868,2.8584477779260746)-- (5.540974555037319,3.471652530576289);
	\draw [color=ffffff] (5.540974555037319,3.471652530576289)-- (4.980865420486898,3.7321861318510186);
	\draw [color=ffffff] (4.980865420486898,3.7321861318510186)-- (4.56,3.28);
	\draw [color=ffffff] (3.06,4.66)-- (2.48,4.62);
	\draw [color=ffffff] (2.48,4.62)-- (2.3388124039143374,4.056026540773814);
	\draw [color=ffffff] (2.3388124039143374,4.056026540773814)-- (2.8315536707435047,3.7474717742191777);
	\draw [color=ffffff] (2.8315536707435047,3.7474717742191777)-- (3.277272117389275,4.120747900323809);
	\draw [color=ffffff] (3.277272117389275,4.120747900323809)-- (3.06,4.66);
	\draw [color=ffffff] (1.42,3.34)-- (0.9,3.22);
	\draw [color=ffffff] (0.9,3.22)-- (0.8534379448804457,2.6883685722015276);
	\draw [color=ffffff] (0.8534379448804457,2.6883685722015276)-- (1.3446610122305143,2.479802280334435);
	\draw [color=ffffff] (1.3446610122305143,2.479802280334435)-- (1.6948156190303902,2.882532650851514);
	\draw [color=ffffff] (1.6948156190303902,2.882532650851514)-- (1.42,3.34);
	\draw [color=sqsqsq] (2.8315536707435047,3.7474717742191777)-- (3.,3.);
	\draw [color=sqsqsq] (3.277272117389275,4.120747900323809)-- (3.,3.);
	\draw [color=sqsqsq] (2.3388124039143374,4.056026540773814)-- (3.,3.);
	\draw [color=sqsqsq] (3.06,4.66)-- (3.,3.);
	\draw [color=sqsqsq] (2.48,4.62)-- (3.,3.);
	\draw [color=sqsqsq] (1.6948156190303902,2.882532650851514)-- (2.,3.);
	\draw [color=sqsqsq] (1.3446610122305143,2.479802280334435)-- (2.,3.);
	\draw [color=sqsqsq] (1.42,3.34)-- (2.,3.);
	\draw [color=sqsqsq] (0.9,3.22)-- (2.,3.);
	\draw [color=sqsqsq] (0.8534379448804457,2.6883685722015276)-- (2.,3.);
	\draw [color=sqsqsq] (4.56,3.28)-- (4.,3.);
	\draw [color=sqsqsq] (4.980865420486898,3.7321861318510186)-- (4.,3.);
	\draw [color=sqsqsq] (4.86,2.74)-- (4.,3.);
	\draw [color=sqsqsq] (5.466275617111868,2.8584477779260746)-- (4.,3.);
	\draw [color=sqsqsq] (5.540974555037319,3.471652530576289)-- (4.,3.);
	\draw (-1.94,2.18) node[anchor=north west] {$C_5\circ P_3$};
	\draw (2.52,2.2) node[anchor=north west] {$P_3\circ C_6$};
	\draw (2.48,4.62)-- (3.06,4.66);
	\draw (3.06,4.66)-- (3.277272117389275,4.120747900323809);
	\draw (3.277272117389275,4.120747900323809)-- (2.8315536707435047,3.7474717742191777);
	\draw (2.8315536707435047,3.7474717742191777)-- (2.3388124039143374,4.056026540773814);
	\draw (2.3388124039143374,4.056026540773814)-- (2.48,4.62);
	\draw (1.42,3.34)-- (1.6948156190303902,2.882532650851514);
	\draw (1.6948156190303902,2.882532650851514)-- (1.3446610122305143,2.479802280334435);
	\draw (1.3446610122305143,2.479802280334435)-- (0.8534379448804457,2.6883685722015276);
	\draw (0.8534379448804457,2.6883685722015276)-- (0.9,3.22);
	\draw (0.9,3.22)-- (1.42,3.34);
	\draw (4.980865420486898,3.7321861318510186)-- (4.56,3.28);
	\draw (4.56,3.28)-- (4.86,2.74);
	\draw (4.86,2.74)-- (5.466275617111868,2.8584477779260746);
	\draw (5.466275617111868,2.8584477779260746)-- (5.540974555037319,3.471652530576289);
	\draw (5.540974555037319,3.471652530576289)-- (4.980865420486898,3.7321861318510186);
	\begin{scriptsize}
	\draw [fill=qqqqff] (-2.,3.) circle (1.5pt);
	\draw [fill=qqqqff] (-1.,3.) circle (1.5pt);
	\draw [fill=qqqqff] (-0.6909830056250525,3.951056516295153) circle (1.5pt);
	\draw [fill=qqqqff] (-1.5,4.538841768587627) circle (1.5pt);
	\draw [fill=qqqqff] (-2.3090169943749475,3.9510565162951536) circle (1.5pt);
	\draw [fill=ffffff] (-0.18,4.68) circle (1.5pt);
	\draw [fill=qqqqff] (0.,4.) circle (1.5pt);
	\draw [fill=ffffff] (-1.94,5.28) circle (1.5pt);
	\draw [fill=qqqqff] (-2.56,4.7) circle (1.5pt);
	\draw [fill=ffffff] (-3.,4.) circle (1.5pt);
	\draw [fill=qqqqff] (-2.74,2.66) circle (1.5pt);
	\draw [fill=ffffff] (-2.04,2.3) circle (1.5pt);
	\draw [fill=ffffff] (-0.28,2.6) circle (1.5pt);
	\draw [fill=qqqqff] (-0.98,2.26) circle (1.5pt);
	\draw [fill=ffffff] (-2.787852545348153,4.337507314218848) circle (1.5pt);
	\draw [fill=ffffff] (-2.3744092963202066,2.4719819238218204) circle (1.5pt);
	\draw [fill=ffffff] (-0.6022589167767503,2.4434742404227214) circle (1.5pt);
	\draw [fill=ffffff] (-0.08672594987873877,4.327631366208569) circle (1.5pt);
	\draw [fill=qqqqff] (-1.06,5.28) circle (1.5pt);
	\draw [fill=qqqqff] (-1.06,5.28) circle (1.5pt);
	\draw [fill=ffffff] (-1.4706666666666668,5.28) circle (1.5pt);
	\draw [fill=qqqqff] (2.,3.) circle (1.5pt);
	\draw [fill=qqqqff] (3.,3.) circle (1.5pt);
	\draw [fill=qqqqff] (4.,3.) circle (1.5pt);
	\draw [fill=ffffff] (4.56,3.28) circle (1.5pt);
	\draw [fill=ffffff] (4.86,2.74) circle (1.5pt);
	\draw [fill=ffffff] (5.466275617111868,2.8584477779260746) circle (1.5pt);
	\draw [fill=qqqqff] (5.540974555037319,3.471652530576289) circle (1.5pt);
	\draw [fill=qqqqff] (4.980865420486898,3.7321861318510186) circle (1.5pt);
	\draw [fill=qqqqff] (3.06,4.66) circle (1.5pt);
	\draw [fill=qqqqff] (2.48,4.62) circle (1.5pt);
	\draw [fill=ffffff] (2.3388124039143374,4.056026540773814) circle (1.5pt);
	\draw [fill=ffffff] (2.8315536707435047,3.7474717742191777) circle (1.5pt);
	\draw [fill=ffffff] (3.277272117389275,4.120747900323809) circle (1.5pt);
	\draw [fill=qqqqff] (1.42,3.34) circle (1.5pt);
	\draw [fill=qqqqff] (0.9,3.22) circle (1.5pt);
	\draw [fill=ffffff] (0.8534379448804457,2.6883685722015276) circle (1.5pt);
	\draw [fill=ffffff] (1.3446610122305143,2.479802280334435) circle (1.5pt);
	\draw [fill=ffffff] (1.6948156190303902,2.882532650851514) circle (1.5pt);
	\end{scriptsize}
	\end{tikzpicture}
	\vspace{-7cm}
\end{figure}
\begin{remark}
	The integer of $  Z_{c}(G)-Z (G)$ can  be very large, for example if $ G=C_{n}   $ and $ H= P_{m}$  where
	$ m > 1, $ then $ Z(G \circ H )=n+2 $ and $ Z_{c}(G \circ H)=2n $ while if we set $
	G=P_{n}  $ and $ H= C_{m},\ m>2$ then $ Z(G \circ H)=2n+1$ and $ Z_{c}(G
	\circ H)=3n$.
	\end{remark}
%%%%%%%%%%%%%%%%%%%%%%%%%%%%%%%%%%%%%%%%%%%%%%%%%%%%%%%%%%%%%%%%%%%%%%%%%%%%%%%%%%%%%%%%%%%%%%%%%%%%%%%%%%
%%%%%%%%%%%%%%%%%%%%%%%%%%%%%%%%%%%%%%%%%%%%%%%%%%%%%%%%%%%%%%%%%%%%%%%%%%%%%%%%%%%%%%%%%%%%%%%%%%%%%%%%%% 
\section{Connected propagation time}
In this section, we introduce the concept of connected propagation time for graph $G$.
We characterize graphs $G$ having extreme minimum and maximum connected propagation time
 $|G|-1$, $|G|-2$ and $0$.
\begin{defi}
	Let $B$ be a connected zero forcing set of $G$ and $B^0=B$.
	for $t \geq 0$, $B^{(t+1)}$
	is the set of vertices $w$ for which there exists a vertex
	$b\in\cup_{s=0}^{t}{B^{(s)}}$ such that $w$ is the only neighbor of $b$ not in $\cup_{s=0}^{t}{B^{(s)}}$. 
	The connected propagation time of $B$ in $G$, denoted by $pt_{c}(G, B)$, is the smallest integer $t'$ such that
	$V=\cup_{s=0}^{t'}{B^{(s)}}$.
\end{defi}
\begin{defi} The minimum connected propagation time of $G$ is
	$pt_{c}(G) = min\{pt_{c}(G, B) | B$ is a minimum connected zero forcing set of $G\}$.
\end{defi}
\begin{defi} The maximum connected propagation time of G is defined as
	$PT_{c}(G) = max\{pt_{c}(G, B) | B$ is a minimum connected zero forcing set of $G\}.$
\end{defi}
Here, the case of connected propagation time $|G|-1$ and $|G|-2$ are investigated.
\begin{thm}\label{t1}Let $ G $ be a connected graph.
Then the following conditions are equivalent:
\begin{itemize}
\item $ Z_{c} (G)=1;$
\item $  pt_{c}(G)=\vert G \vert -1; $
\item $ Pt_{c}(G)=\vert G \vert -1; $
\item $ G $ is a path.
\end{itemize}
\end{thm}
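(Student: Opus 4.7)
My strategy is to establish $(1)\Leftrightarrow(4)$ directly and then derive the propagation-time conditions $(2)$ and $(3)$ via a simple round-counting bound. The heart of the argument is the implication $Z_c(G)=1\Rightarrow G=P_n$.

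The direction $(4)\Rightarrow(1),(2),(3)$ is quick: any endpoint $v$ of $P_n$ has degree $1$, so $\{v\}$ is a connected zero forcing set and $Z_c(P_n)=1$. Starting from $v$, exactly one new vertex is colored per round as the force travels along the path, so $pt_c(P_n,\{v\})=n-1$; since the only single-vertex ZFSs of a path are its two endpoints, $pt_c(P_n)=PT_c(P_n)=n-1$.

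For $(1)\Rightarrow(4)$, let $\{v\}$ be a minimum CZFS of $G$ and inductively build a sequence $v=v_0,v_1,\dots,v_{n-1}$ exhausting $V(G)$, in which $v_k\to v_{k+1}$ at round $k+1$. The base case $\deg(v_0)=1$ is immediate, since in round $1$ every vertex other than $v_0$ is white, so $v_0$ can force only if it has a unique neighbor. For the inductive step, suppose $v_0,\dots,v_k$ have been produced with $\deg(v_j)=2$ and $N(v_j)=\{v_{j-1},v_{j+1}\}$ for $0<j<k$. Each $v_j$ with $j<k$ then has no white neighbor at round $k+1$, because its sole white neighbor $v_{j+1}$ was blackened in round $j+1$; hence only $v_k$ can force. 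The inductive description of the neighborhoods of $v_0,\dots,v_{k-2}$ rules out any edge between them and $v_k$, so $v_{k-1}$ is the unique black neighbor of $v_k$ and we must have $\deg(v_k)=2$, with new neighbor $v_{k+1}$. Since $\{v\}$ is a ZFS the sequence exhausts $V(G)$, so $G$ is the path $v_0v_1\cdots v_{n-1}$.

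For $(2)\Rightarrow(1)$ and $(3)\Rightarrow(1)$, observe that for any minimum CZFS $B$ attaining propagation time $|G|-1$, the identity $|G|=|B|+\sum_{t\ge 1}|B^{(t)}|$ together with $|B^{(t)}|\ge 1$ for each $1\le t\le pt_c(G,B)$ gives $|G|\ge |B|+(|G|-1)$, so $|B|\le 1$. Since $Z_c(G)\ge 1$ always, we obtain $Z_c(G)=1$; the converses $(1)\Rightarrow(2),(3)$ follow from $(1)\Rightarrow(4)$ and the computation in the second paragraph. The delicate step is the induction in $(1)\Rightarrow(4)$: ruling out both a re-force by some earlier $v_j$ and a ``shortcut'' edge $v_jv_k$ with $j<k-1$ requires carefully tracking the prescribed neighborhoods of $v_0,\dots,v_{k-1}$ supplied by the inductive hypothesis, and this is where all the work is concentrated.
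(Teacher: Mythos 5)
Your proof is correct and follows essentially the same route as the paper: the propagation-time conditions are handled by the counting bound $pt_c(G)\le Pt_c(G)\le |G|-Z_c(G)$ (your identity $|G|=|B|+\sum_t|B^{(t)}|$ with $|B^{(t)}|\ge 1$ is exactly this inequality), and everything reduces to the equivalence $Z_c(G)=1\Leftrightarrow G$ is a path. The only difference is that the paper deduces this last step by passing to $Z(G)=1$ and invoking the known characterization of graphs with zero forcing number one, whereas you re-derive it with an explicit forcing-chain induction, making the argument self-contained.
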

\begin{proof}
We prove the sequence $ (ii)\rightarrow(iii)\rightarrow(i)\rightarrow(iv)
 \rightarrow(ii). $\\
First, note that for each graph $ G, $ we have $ pt_{c} (G) \leq Pt_{c}(G)
\leq \vert G\vert -Z_{c}(G) \leq \vert G\vert-1. $  Thus, from $ (ii), $ we can
deduce $ (iii), $ and if $ (iii) $ holds, then $ Z_{c}(G)=1. $ In addition,
if $ Z_{c}(G)=1, $ then
 $Z(G)=1 $ and therefore $ G $ is a path.  The last part is trivial.
\end{proof}
\begin{defi}
For $k\geq1$ and nonnegative integer numbers $n_1,\cdots,n_k$
($n_i$s can be zero), by $PC(n_1,n_2,\cdots,n_k)$ we mean a
collection of graphs which contains a path
$P_{k+2}=(v_1,\cdots,v_{k+2})$ and $k$ cycles, with the following
conditions:
\begin{itemize}
\item the $i$th cycle ($1\leq i\leq k$) is $(v_i,v_{i+1},v_{i+2},u^{i}_{1},\dots,u^{i}_{n_i})$ with $n_i$
new vertices, which are probably neighbors of  $v_{i+1}$.
\item For each $i,j,s,t$ we have $u^i_j\neq u^t_s$ and $u^i_j\neq v_t$.

\end{itemize}

For instance, the following figures are PC(3) and PC(4,3,5,0,2).

\vspace*{-1.5cm} \hspace*{-2cm}
\definecolor{qqqqff}{rgb}{0.,0.,1.}
\begin{tikzpicture}[line cap=round,line join=round,>=triangle 45,x=1.0cm,y=1.0cm]
\clip(-4.3,-2.48) rectangle (7.06,6.3); \draw (-1.44,2.4)--
(-0.32,2.4); \draw (-0.32,2.4)-- (0.86,2.4); \draw
[shift={(-0.29,2.4)}]
plot[domain=0.:3.141592653589793,variable=\t]({1.*1.15*cos(\t
r)+0.*1.15*sin(\t r)},{0.*1.15*cos(\t r)+1.*1.15*sin(\t r)});
\draw [dotted] (-0.32,2.4)--
(-1.220930458496562,3.075180332535968); \draw [dotted]
(-0.32,2.4)-- (-0.2797325521063168,3.5499541640925303); \draw
[dotted] (-0.32,2.4)-- (0.5573133587217656,3.1775346115329146);
\begin{scriptsize}
\draw [fill=qqqqff] (-1.44,2.4) circle (1.5pt); \draw
[fill=qqqqff] (-0.32,2.4) circle (1.5pt); \draw [fill=qqqqff]
(0.86,2.4) circle (1.5pt);
\end{scriptsize}
\end{tikzpicture}
%\caption{PC(4,3,5,2)}
%\end{figure}
%\begin{figure}
\hspace*{-7cm}
\begin{tikzpicture}[line cap=round,line join=round,>=triangle 45,x=1.0cm,y=1.0cm]
\clip(-4.3,-2.48) rectangle (7.06,6.3); \draw (-2.7,2.44)--
(-1.48,2.44); \draw (-1.48,2.44)-- (-0.22,2.44); \draw
(-0.22,2.44)-- (1.08,2.44); \draw (1.08,2.44)-- (2.36,2.44); \draw
(2.36,2.44)-- (3.68,2.44); \draw [shift={(-1.46,2.44)}]
plot[domain=0.:3.141592653589793,variable=\t]({1.*1.24*cos(\t
r)+0.*1.24*sin(\t r)},{0.*1.24*cos(\t r)+1.*1.24*sin(\t r)});
\draw [shift={(1.07,2.44)}]
plot[domain=0.:3.141592653589793,variable=\t]({1.*1.29*cos(\t
r)+0.*1.29*sin(\t r)},{0.*1.29*cos(\t r)+1.*1.29*sin(\t r)});
\draw [shift={(2.38,2.44)}]
plot[domain=3.141592653589793:6.283185307179586,variable=\t]({1.*1.3*cos(\t
r)+0.*1.3*sin(\t r)},{0.*1.3*cos(\t r)+1.*1.3*sin(\t r)}); \draw
[shift={(-0.2,2.44)}]
plot[domain=3.141592653589793:6.283185307179586,variable=\t]({1.*1.28*cos(\t
r)+0.*1.28*sin(\t r)},{0.*1.28*cos(\t r)+1.*1.28*sin(\t r)});
\draw [dotted] (-1.48,2.44)-- (-2.52,3.1); \draw [dotted]
(-1.48,2.44)-- (-1.9654870420636226,3.572290974222514); \draw
[dotted] (-1.48,2.44)-- (-1.1931798615145965,3.6509529362029847);
\draw [dotted] (-1.48,2.44)--
(-0.5057434992786792,3.231829862300671); \draw [dotted]
(-0.22,2.44)-- (-0.9053587644693887,1.3718852995177824); \draw
[dotted] (-0.22,2.44)-- (-0.16063401499267416,1.1606054872618832);
\draw [dotted] (-0.22,2.44)--
(0.6295918457962982,1.4652295811893494); \draw [dotted]
(1.08,2.44)-- (-0.1263432516746661,2.922558622524235); \draw
[dotted] (1.08,2.44)-- (0.2058857288597623,3.397813408974721);
\draw [dotted] (1.08,2.44)--
(0.8545993806433108,3.7118893714394954); \draw [dotted]
(1.08,2.44)-- (1.4312,3.6784); \draw [dotted] (1.08,2.44)--
(2.0983944834364734,3.2187841719227666); \draw (3.68,2.44)--
(5.06,2.44); \draw [shift={(3.71,2.44)}]
plot[domain=0.:3.141592653589793,variable=\t]({1.*1.35*cos(\t
r)+0.*1.35*sin(\t r)},{0.*1.35*cos(\t r)+1.*1.35*sin(\t r)});
\draw [dotted] (3.68,2.44)--
(4.396784571370965,3.6022508130893254); \draw [dotted]
(3.68,2.44)-- (2.9302536191295805,3.5420415516301924);
\begin{scriptsize}
\draw [fill=qqqqff] (-2.7,2.44) circle (1.5pt); \draw[fill=qqqqff]
(-1.48,2.44) circle (1.5pt); \draw [fill=qqqqff] (-0.22,2.44)
circle (1.5pt); \draw [fill=qqqqff](1.08,2.44) circle (1.5pt);
\draw[fill=qqqqff] (2.36,2.44) circle (1.5pt); \draw[fill=qqqqff]
 (3.68,2.44) circle (1.5pt); \draw[fill=qqqqff]
(5.06,2.44) circle (1.5pt);
\end{scriptsize}
\end{tikzpicture}
\vspace{-3.5cm}
\end{defi}
\begin{defi} Let $G$ and $H$ be disjoint graphs, each with a vertex
labeled $v$. Then $G\oplus_{ v} H$ is the graph obtained by
identifying the vertex $v$ in $G$ with the vertex $v$ in $H$.
\end{defi}
\begin{thm}\label{t2}
Let $ G $ be a  graph with $ Pt_{c}(G)=\vert G \vert -2 $. Then
\begin{enumerate}
\item If  $ G $ is not connected, then $G=K_1\dot{\cup} P_{|G|-1}$.
\item If $G$ is connected, then $G$ is
of the form $PC(n_1,n_2,\cdots,n_{k-1},0)$ or $PC(n_1,n_2,\cdots,n_k)\oplus_{v_{k+1}}P_m $ where
$(v_1,\dots, v_{k+2}) $ is the path of PC and 
$P_m$ is a path with $m\geq2$ and pendent vertex $v_{k+1}$.
\end{enumerate}
\end{thm}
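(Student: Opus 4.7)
The plan is to start from the bound $Pt_c(G) \leq |G| - Z_c(G)$ that appears in the proof of Theorem~\ref{t1}. Combined with the hypothesis $Pt_c(G) = |G|-2$, this forces $Z_c(G) \leq 2$, and Theorem~\ref{t1} rules out $Z_c(G) = 1$ since that would give $Pt_c(G) = |G|-1$. Hence $Z_c(G) = 2$, and some minimum CZFS $B$ witnesses $pt_c(G,B) = |G|-2$. Since there are only $|G|-2$ vertices outside $B$ and exactly $|G|-2$ propagation steps, \emph{exactly one} new vertex is forced at each step. I would label vertices by this forcing order: $B = \{v_1, v_2\}$ and $B^{(s)} = \{v_{s+2}\}$ for $s = 1, \dots, |G|-2$.

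For Part~1 (disconnected case), every CZFS must meet each component of $G$, so $|B|=2$ forces at most two components; the one-component situation belongs to Part~2. In the two-component case $G = G_1\,\dot\cup\,G_2$, the restriction of $B$ to each $G_i$ is a singleton, so $Z_c(G_i)=1$ and Theorem~\ref{t1} gives $G_i \cong P_{|G_i|}$. Propagation from an endpoint along a path takes $|G_i|-1$ steps, and the components propagate independently, so $pt_c(G,B) = \max(|G_1|-1,\,|G_2|-1)$. Setting this equal to $|G|-2 = |G_1|+|G_2|-2$ yields $\min(|G_1|,|G_2|)=1$, hence $G = K_1\,\dot\cup\,P_{|G|-1}$.

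For Part~2 (connected case), the core tool is the following observation applied repeatedly: at step~$s$, any forcer $v_i \in \{v_1,\dots,v_{s+1}\}$ of $v_{s+2}$ must satisfy $N(v_i) \subseteq \{v_1,\dots,v_{s+2}\}$ (its only remaining white neighbor is $v_{s+2}$), while every other vertex of $\{v_1,\dots,v_{s+1}\}$ that still has a white neighbor must have at least two (else a second vertex would be forced at the same step). Applied at step~$1$, this already gives (up to swapping $v_1,v_2$) $N(v_1) = \{v_2,v_3\}$, so $\deg(v_1)=2$. Iterating along the forcing order, I expect to identify a ``spine'' $v_1, v_2, \dots, v_{k+2}$ in which consecutive vertices are adjacent, and to show that the remaining vertices partition into groups $\{u^i_1,\dots,u^i_{n_i}\}$ attached as a cycle to each consecutive triple $(v_i,v_{i+1},v_{i+2})$, with possible chords from the middle vertex $v_{i+1}$ — precisely the skeleton of $PC(n_1,\dots,n_k)$.

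The main obstacle is the endgame, which is where the two families of the conclusion diverge. Once the cycles up to position $k$ have been absorbed, the remaining white vertices must disappear via one of exactly two patterns consistent with one-force-per-step: either the last cycle is a triangle (so $n_k = 0$) and $v_{k+2}$ is forced from within it, yielding $PC(n_1,\dots,n_{k-1},0)$; or, after the last cycle closes, a pendant path attached at $v_{k+1}$ takes over and is traversed one vertex at a time, yielding $PC(n_1,\dots,n_k) \oplus_{v_{k+1}} P_m$. The delicate point is ruling out every other termination scenario — in particular, showing that any pendant path must attach precisely at $v_{k+1}$ rather than at another spine vertex, at some $u^i_j$, or partway through a cycle. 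This reduces to carefully tracking which vertex still has exactly one white neighbor in the last few steps, especially at the moment the final cycle completes, and is where the bulk of the casework lies.
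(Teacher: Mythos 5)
Your reduction to $Z_c(G)=2$ via $Pt_c(G)\leq |G|-Z_c(G)$ and Theorem~\ref{t1}, the ``exactly one vertex forced per step'' observation, and your treatment of Part~1 (two components, each a path with a singleton CZFS, $\max(|G_1|,|G_2|)-1=|G_1|+|G_2|-2$ forcing $\min(|G_1|,|G_2|)=1$) are all correct and agree with the paper, which argues Part~1 more tersely but identically in substance. Your step-1 observation that the first forcer has degree $2$ with neighborhood $\{v_2,v_3\}$ (in your forcing-order labeling) is also sound.

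However, Part~2 --- which is the actual content of the theorem --- is not proved in your proposal; it is a plan. You write that you ``expect to identify a spine,'' that the endgame is ``the main obstacle,'' and that ruling out other termination scenarios ``is where the bulk of the casework lies,'' but none of that casework is carried out, and it is precisely there that the $PC$ structure has to be extracted. The paper's argument supplies the missing machinery as follows: (a)~the edge $v_1v_2$ of the CZFS cannot be a cut edge (else $G$ would be a path and $Z_c(G)=1$), so $v_1v_2$ lies on a cycle; (b)~one fixes a \emph{longest} path $P\colon v_1,u_1,\dots,u_n,v_2$ between $v_1$ and $v_2$ avoiding that edge, and shows that only one of $v_1,v_2$ can have neighbors off $P$ (otherwise each of $v_1,v_2$ has two white neighbors and no force occurs); (c)~the one-force-per-step constraint pins down each $u_i$ to have neighbors only $u_{i-1},u_{i+1}$ and possibly $v_2$, which is exactly what makes the chords land on the middle spine vertex; and (d)~after the first cycle is consumed, the pair $\{v_2,v_3\}$ (with $v_3=u_{n_1}$ or $u_n$) is again a two-element CZFS of the residual graph forcing one vertex per step, so the whole argument recurses, with the two terminal cases producing $PC(n_1,\dots,n_{k-1},0)$ and $PC(n_1,\dots,n_k)\oplus_{v_{k+1}}P_m$ respectively. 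Your local observation about forcers is the right invariant, but without the longest-path choice and the recursion on the residual graph (or an equivalent device), the claim that the off-spine vertices organize themselves into cycles sharing consecutive spine edges, and that a pendant path can only attach at $v_{k+1}$, remains unestablished. Also note a notational collision in your sketch: you label \emph{all} vertices $v_1,v_2,v_3,\dots$ by forcing order, but the $v_i$ of the theorem's $PC$ spine are not the vertices in forcing order (the third vertex forced is $u^1_1$, not the spine vertex $v_3$), so the two labelings must be kept separate.
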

\begin{proof}
From $Pt_{c}(G)  \leq \vert G \vert -Z_{c}(G), $ it  follows
that $ Z_{c}(G) \leq 2. $ If $ Z_{c}(G)=1 $ then by previous
theorem $ Pt_{c}(G)=\vert G \vert-1, $ which is a contradiction. So, $Z_{c}(G)=2. $\\
If $G$ is not connected then it contain two component,
each of them has the CZFS of size one thus each component is one path and in every stage,
only the color of one vertex will be changed. So, one path is $P_1=K_1$ and the graph $G$ is $G=K_1\dot{\cup} P_{|G|-1}$. \\
If $G$ is connected, then the connected zero forcing set is consist of two vertices $ v_{1} $ and $v_{2} $
which are adjacent by an edge $e$. Observe that in each stage, only
the color of one vertex will be changed.
The edge $ e $ is not a cutting-
edge, otherwise $ G\setminus \lbrace e \rbrace $ has two components $
G_{1} $ and $ G_{2} $ with $ v_{1}  \in V( G_{1})$ and $ v_{2} \in
V( G_{2}),$ thus $ \lbrace v_{1} \rbrace$ is  a zero forcing set
for $ G_{1} $ and $ \lbrace v_{2} \rbrace$ is  a zero forcing set
for $ G_{2}. $ Therefore,  $ G_{1} $ and $ G_{2} $ are path and so
$ G $ is a path and $ \lbrace v_{1}, v_{2} \rbrace $ is
not an minimum connected zero forcing set. Thus, there is another path
between  $ v_{1} $ and $ v_{2} $.\\
Consider a path between $v_1$
and $v_2$ with largest possible length, namely,
$P: v_1,u_1,u_2,\cdots, u_n, v_2$.
Just one vertex of $v_1$ or $v_2$ can have neighborers in $V(G)-V(P)$. Otherwise 
$v_1$ and $v_2$ can not change the color of vertices $V(G)-\{v_1,v_2\}$.
Because both of them have at least two white neighbors.\\
 Without loss of generality, let
$v_1$ has no neighbors except $u_1$ and $v_2$. There exist two
cases for $v_2$.
\begin{enumerate}
\item[Case 1.] If $v_2$ has another neighbors which does not belong to
this path, then in the first step $v_1$ forces $u_1$. Again, $u_1$
can not have more than one white neighbor, which is $u_2$. 
So, the only neighbors of $u_1$ are $v_1,u_2$ and probably $v_2$.
Continuing this way, in the $i$th step, $u_{i-1}$ forces $u_i$ and
all of the $u_i$s with $i<n$ are adjacent to $u_{i-1}, u_{i+1}$
and probably $v_2$. After $n$ steps, the set $\{u_n,v_2\}$ is a
zero forcing set for $G\setminus\{v_1,u_1,\cdots,u_{n-1}\}$ and
since the path $P$ has maximum length, it follows that there is no
other path except the edge $\{u_n,v_2\}$ between these two vertices 
$G\setminus\{v_1,u_1,\cdots,u_{n-1}\}$.
This means that this edge, is a cutting edge.
 Since in each step only one vertices can be forced by black
vertices, it can be deduced that $u_n$ has no other neighbors and
a path can be jointed to $v_2$. Putting $v_3=u_{n}$, $G$ is of the
form $PC(n-1)\oplus_{v_2}P_m$ with $m\geq2$ (see following figure).\\
		\definecolor{xdxdff}{rgb}{0.49019607843137253,0.49019607843137253,1.}
		\definecolor{qqqqff}{rgb}{0.,0.,1.}
		\begin{tikzpicture}[line cap=round,line join=round,>=triangle 45,x=1.0cm,y=1.0cm]
		\clip(-2.5,1.2) rectangle (9.5,6.4);
		\draw(-1.,3.3125) circle (1.213530489934225cm);
		\draw (0.,4.)-- (2.44,6.06);
		\draw [shift={(5.,3.5)}] plot[domain=1.5707963267948966:4.71238898038469,variable=\t]({1.*1.5*cos(\t r)+0.*1.5*sin(\t r)},{0.*1.5*cos(\t r)+1.*1.5*sin(\t r)});
		\draw (5.,5.)-- (5.02,3.5);
		\draw (5.02,3.5)-- (5.,2.);
		\draw (5.02,3.5)-- (6.04,3.5);
		\draw (6.04,3.5)-- (7.,3.48);
		\draw (7.,3.48)-- (8.,3.46);
		\draw (8.,3.46)-- (8.92,3.46);
		\draw (1.56,4.02) node[anchor=north west] {$\Rightarrow$};
		\draw (6.32,2.14) node[anchor=north west] {$PC(n-1)\oplus_{v_2}P_m$};
		\begin{scriptsize}
		\draw [fill=qqqqff] (-2.,4.) circle (2.5pt);
		\draw[color=qqqqff] (-1.81,4.42) node {$u_1$};
		\draw [fill=qqqqff] (0.,4.) circle (2.5pt);
		\draw[color=qqqqff] (0.19,4.42) node {$v_2$};
		\draw [fill=qqqqff] (-1.38,2.16) circle (0.5pt);
		\draw [fill=qqqqff] (2.44,6.06) circle (2.5pt);
		\draw [fill=xdxdff] (0.5832722708194403,4.492434786019691) circle (2.5pt);
		\draw [fill=xdxdff] (1.1800400109834075,4.996263287961401) circle (2.5pt);
		\draw [fill=xdxdff] (1.7903032204919007,5.511485505825129) circle (2.5pt);
		\draw [fill=xdxdff] (-0.9418127850751751,4.524634686418672) circle (2.5pt);
		\draw[color=xdxdff] (-0.75,4.94) node {$v_1$};
		\draw [fill=xdxdff] (-1.9019592587082141,2.500634404208105) circle (0.5pt);
		\draw [fill=xdxdff] (0.2047360243166858,3.1666666646009016) circle (2.5pt);
		\draw[color=xdxdff] (0.39,3.58) node {$u_n$};
		\draw [fill=xdxdff] (-0.7692232350775088,2.121114951087639) circle (0.5pt);
		\draw [fill=qqqqff] (5.,2.) circle (2.5pt);
		\draw[color=qqqqff] (5.25,2.42) node {$v_1$};
		\draw [fill=qqqqff] (5.,5.) circle (2.5pt);
		\draw[color=qqqqff] (5.25,5.42) node {$v_3=u_n$};
		\draw [fill=qqqqff] (5.02,3.5) circle (2.5pt);
		\draw[color=qqqqff] (5.27,3.92) node {$v_2$};
		\draw [fill=qqqqff] (6.04,3.5) circle (2.5pt);
		\draw [fill=qqqqff] (7.,3.48) circle (2.5pt);
		\draw [fill=qqqqff] (8.,3.46) circle (2.5pt);
		\draw [fill=qqqqff] (8.92,3.46) circle (2.5pt);
		\draw [fill=xdxdff] (4.109526577144665,4.70708619542612) circle (2.5pt);
		\draw[color=xdxdff] (4.43,5.12) node {$u_{n-1}$};
		\draw [fill=xdxdff] (3.563260572168273,3.9310218283495177) circle (0.5pt);
		\draw [fill=xdxdff] (3.5066663973281864,3.3587387132607756) circle (0.5pt);
		\draw [fill=xdxdff] (3.6507377621360035,2.844644055894631) circle (0.5pt);
		\draw [fill=xdxdff] (4.320961460334153,2.1624998461127287) circle (2.5pt);
		\draw[color=xdxdff] (4.57,2.58) node {$u_1$};
	\end{scriptsize}
	\end{tikzpicture}
\item[Case 2.] All of the neighbors of $v_2$ belongs to the path.
If $n=1$, put $v_3=u_1$ and $G=PC(0)=K_3$, else let $1\leq n_1<n$,
be the largest number for which $u_{n_1}$ is adjacent to $v_2$.
Put $v_3=u_{n_1}$. As previous part, it can be seen that each
$u_i\ (1\leq i\leq i_2)$ are at most of degree 3 with two
neighbors of the path and probably $v_2$ and in the $i$th step,
$u_{i-1}$ forces $u_i$.\\
Now $\{v_2,v_3\}$ is a CZFS for $G\setminus\{v_1,u_1,\cdots,
u_{{n_1}-1}\}$ such that in each step only one vertex becomes black.
So by repeating the proof, there exist two cases, either $G$ is of the form
$PC(n_1-1,n-n_1)\oplus_{v_3} P_m$ with $v_4=u_{n_1+1}$ or all neighbors of $v_3$ belongs to the path.
In the first case the graph has form $A$ (see following figure).
	\definecolor{xdxdff}{rgb}{0.49019607843137253,0.49019607843137253,1.}
	\definecolor{qqqqff}{rgb}{0.,0.,1.}
	\begin{tikzpicture}[line cap=round,line join=round,>=triangle 45,x=1.0cm,y=1.0cm]
	\clip(0.3,1.9) rectangle (10.8,5.3);
	\draw (3.88,3.4)-- (7.82,3.4);
	\draw [shift={(5.2,3.4)}] plot[domain=0.:3.141592653589793,variable=\t]({1.*1.32*cos(\t r)+0.*1.32*sin(\t r)},{0.*1.32*cos(\t r)+1.*1.32*sin(\t r)});
	\draw [shift={(6.62,3.4)}] plot[domain=0.:3.141592653589793,variable=\t]({1.*1.2*cos(\t r)+0.*1.2*sin(\t r)},{0.*1.2*cos(\t r)+1.*1.2*sin(\t r)});
	\draw (6.52,3.4)-- (9.74,2.12);
	\draw (4.72,2.86) node[anchor=north west] {A};
	\begin{scriptsize}
	\draw [fill=qqqqff] (3.88,3.4) circle (2.5pt);
	\draw[color=qqqqff] (4.07,3.82) node {$v_1$};
	\draw [fill=qqqqff] (7.82,3.4) circle (2.5pt);
	\draw[color=qqqqff] (8.01,3.82) node {$v_4$};
	\draw [fill=xdxdff] (5.42,3.4) circle (2.5pt);
	\draw[color=xdxdff] (5.61,3.82) node {$v_2$};
	\draw [fill=xdxdff] (6.52,3.4) circle (2.5pt);
	\draw[color=xdxdff] (6.71,3.82) node {$v_3$};
	\draw [fill=xdxdff] (4.329341344660464,4.3921459095729585) circle (0.5pt);
	\draw [fill=xdxdff] (4.826416941488345,4.666031476067273) circle (0.5pt);
	\draw [fill=xdxdff] (5.281077394972196,4.717507668298186) circle (0.5pt);
	\draw [fill=qqqqff] (6.4,4.58) circle (0.5pt);
	\draw [fill=xdxdff] (6.827359373609751,4.581948429575577) circle (0.5pt);
	\draw [fill=xdxdff] (7.180545239714663,4.461032060888469) circle (0.5pt);
	\draw [fill=qqqqff] (9.74,2.12) circle (2.5pt);
	\draw [fill=xdxdff] (7.187527220630371,3.1346475644699145) circle (2.5pt);
	\draw [fill=xdxdff] (7.899340974212034,2.8516905444126075) circle (2.5pt);
	\draw [fill=xdxdff] (8.42939541547278,2.6409856733524357) circle (2.5pt);
	\draw [fill=xdxdff] (9.12414040114613,2.3648137535816627) circle (2.5pt);
	\end{scriptsize}
	\end{tikzpicture}\\
When all neighbors of $v_3$ belong to the path,
if $n-n_1=1$, then put
$v_4=u_{n_1+1}$ and $G=PC(n_1-1,0)$ and graph is of the form $B$ (see following figure).
Else we can choose $v_4=u_{n_2}$, such that $n_1+1<n_2\leq n$ and $n_2$ is the
smallest number for which $u_{n_2}$ is adjacent to $v_3$. 
  \definecolor{xdxdff}{rgb}{0.49019607843137253,0.49019607843137253,1.}
  \definecolor{qqqqff}{rgb}{0.,0.,1.}
  \begin{tikzpicture}[line cap=round,line join=round,>=triangle 45,x=1.0cm,y=1.0cm]
  \clip(-3.,1.5) rectangle (9.,5.);
  \draw(-0.9997826086956524,3.27) circle (1.238278979013887cm);
  \draw (0.92,3.86) node[anchor=north west] {$\Rightarrow$};
  \draw (-0.16,4.18)-- (-0.9610126742620914,2.032328104039002);
  \draw (3.88,3.4)-- (7.82,3.4);
  \draw [shift={(5.2,3.4)}] plot[domain=0.:3.141592653589793,variable=\t]({1.*1.32*cos(\t r)+0.*1.32*sin(\t r)},{0.*1.32*cos(\t r)+1.*1.32*sin(\t r)});
  \draw [shift={(6.62,3.4)}] plot[domain=0.:3.141592653589793,variable=\t]({1.*1.2*cos(\t r)+0.*1.2*sin(\t r)},{0.*1.2*cos(\t r)+1.*1.2*sin(\t r)});
  \draw (5.74,2.46) node[anchor=north west] {B};
  \begin{scriptsize}
  \draw [fill=qqqqff] (-2.,4.) circle (2.5pt);
  \draw[color=qqqqff] (-1.81,4.42) node {$u_1$};
  \draw [fill=qqqqff] (-0.16,4.18) circle (2.5pt);
  \draw[color=qqqqff] (0.03,4.6) node {$v_2$};
  \draw [fill=qqqqff] (-2.,2.54) circle (0.5pt);
  \draw [fill=xdxdff] (-0.9404087357798766,4.50685470977098) circle (2.5pt);
  \draw[color=xdxdff] (-0.75,4.92) node {$v_1$};
  \draw [fill=xdxdff] (-2.1210381685115953,2.744528022203361) circle (0.5pt);
  \draw [fill=xdxdff] (0.2289342273794156,3.4235896045093837) circle (2.5pt);
  \draw[color=xdxdff] (0.41,3.84) node {$u_n$};
  \draw [fill=xdxdff] (-1.7478702930185108,2.2832377964108383) circle (0.5pt);
  \draw [fill=xdxdff] (-0.9610126742620914,2.032328104039002) circle (2.5pt);
  \draw[color=xdxdff] (-0.75,2.46) node {$u_{n_1}$};
  \draw [fill=qqqqff] (3.88,3.4) circle (2.5pt);
  \draw[color=qqqqff] (4.07,3.82) node {$v_1$};
  \draw [fill=qqqqff] (7.82,3.4) circle (2.5pt);
  \draw[color=qqqqff] (8.01,3.82) node {$v_4=u_{n_1}+1$};
  \draw [fill=xdxdff] (5.42,3.4) circle (2.5pt);
  \draw[color=xdxdff] (5.61,3.82) node {$v_2$};
  \draw [fill=xdxdff] (6.52,3.4) circle (2.5pt);
  \draw[color=xdxdff] (6.71,3.82) node {$v_3$};
  \draw [fill=xdxdff] (4.329341344660464,4.3921459095729585) circle (0.5pt);
  \draw [fill=xdxdff] (4.826416941488345,4.666031476067273) circle (0.5pt);
  \draw [fill=xdxdff] (5.281077394972196,4.717507668298186) circle (0.5pt);
  \draw [fill=qqqqff] (6.4,4.58) circle (0.5pt);
  \draw [fill=xdxdff] (6.827359373609751,4.581948429575577) circle (0.5pt);
  \draw [fill=xdxdff] (7.180545239714663,4.461032060888469) circle (0.5pt);
  \end{scriptsize}
  \end{tikzpicture}
 Going on, we obtain a path $v_1,v_2,\cdots, v_{k+2}$ with
$v_i=u_{n_{i-2}}$ and $1\leq n_1<n_3<\dots<n_4<n_2<n$ and $G$ is
of the form $PC(n_1-1,n-n_2,n_3-n_1-1,n_2-n_4-1,...,
n_{k-1}-n_{k-3}-1, |n_k-n_{k-2}|-1)\oplus_{v_{k+1}} P_m$ or
$|n_k-n_{k-2}|=1$. Notice that $n_1-1$ is the number of vertices between $v_1$ and $v_3$.
$n-n_2=n-(n_1+1)$ is the number of vertices between $v_2$ and $v_4$, in the fact $i$th
integer is the number between $v_i$ and $v_{i+2}$. So, $G$ is of the form $PC(n_1-1,n-n_2,n_3-n_1-1,n_2-n_4-1,...,
n_{k-1}-n_{k-3}-1, |n_k-n_{k-2}|-1,0)$.
\end{enumerate}
\end{proof}
\begin{thm}\label{pt}
For connected graph $ G, $  we have $ pt_{c}(G)= \vert G  \vert-2
$ if and only if $ G $ is as Theorem \ref{t2} and in
addition\begin{enumerate}
    \item If $G=PC(n)\oplus_{v_{2}}P_m $ for $m\geq1$,  with cycle
    $v_1,u_1,\cdots,u_n,v_3,v_2$, then $u_1$ and $u_n$ are
    adjacent to $v_2$.
    \item If $G=PC(n_1,n_2,\cdots,n_k)\oplus_{v_{k+1}}P_m $ for some $k>1$ and $m\geq1$, then in
    the first cycle $v_1,u_1,\cdots,u_n,v_3,v_2$, we have $u_1$ is
    adjacent to $v_2$.
\end{enumerate}
\end{thm}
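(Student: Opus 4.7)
The plan is to reduce to the structural skeleton supplied by Theorem \ref{t2} and then translate the equation $pt_c(G)=|G|-2$ into a one-vertex-per-step propagation requirement, from which conditions (1) and (2) drop out by a case analysis of the adjacent pairs that can serve as a minimum connected zero forcing set. First, from the general bound $pt_c(G)\le Pt_c(G)\le |G|-Z_c(G)$ together with Theorem \ref{t1} (which rules out $Z_c(G)=1$, since that case forces $pt_c(G)=|G|-1$), the hypothesis $pt_c(G)=|G|-2$ yields $Z_c(G)=2$ and, simultaneously, $Pt_c(G)=|G|-2$. Hence Theorem \ref{t2} applies and $G$ is one of the two listed PC-forms. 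Because every minimum CZFS $B$ has size two and already satisfies $pt_c(G,B)\le |G|-2$, the identity $pt_c(G)=|G|-2$ is equivalent to the statement that \emph{every} minimum CZFS of $G$ propagates exactly one new vertex per step.

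For the forward direction I would argue by contraposition: assume $G$ has the PC-shape but some adjacency in (1) or (2) fails, and exhibit a size-$2$ CZFS $B$ that performs two simultaneous forces at step $0$. In case (1) with $G=PC(n)\oplus_{v_2}P_m$, if $u_n$ is not adjacent to $v_2$ take $B=\{v_1,u_n\}$: the vertex $v_1$ has only the cycle-neighbors $v_2,u_n$, so $v_1$ forces $v_2$, while $u_n$ (having its remaining neighbor on the cycle arc away from $v_2$) simultaneously forces $u_{n-1}$, or $v_1$'s other cycle side when $n=1$. The symmetric failure $u_1\not\sim v_2$ is handled by $B=\{v_3,u_1\}$, and in case (2) with $k>1$ the failure ``$u_1\not\sim v_2$ in the first cycle'' is exploited by $B=\{v_1,u_1\}$. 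In every construction $B$ is connected (a cycle edge) and is a ZFS, because after step $0$ the black set contains the standard CZFS $\{v_1,v_2\}$ supplied by Theorem \ref{t2}; yet $pt_c(G,B)\le |G|-3$, contradicting the hypothesis.

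For the reverse direction I assume the PC-shape together with (1)--(2) and enumerate the adjacent pairs $\{a,b\}\subseteq V(G)$ that can be CZFSs, verifying that each propagates one vertex per step. The candidates split into three families: pairs inside the attached path $P_m$ or adjacent to its junction $v_{k+1}$; path-of-PC pairs $\{v_i,v_{i+1}\}$; and cycle pairs involving some $u^{(i)}_j$. The first family reduces to the path case of Theorem \ref{t1}; path-of-PC pairs do not double-fire at step $0$ because each interior $v_j$ has degree at least three due to its incident cycle(s); cycle pairs are precisely where (1) and (2) intervene, ensuring that whenever $\{v,u\}$ with $v\in\{v_1,v_3\}$ and $u$ a cycle-end of the first cycle could fire twice at once, in fact one of $v,u$ has an extra white neighbor through $v_2$ that suppresses the simultaneous force. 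The main obstacle is exactly this enumeration, in particular explaining the asymmetry in condition (2): when $k>1$, the vertex $v_3$ is shared with the second cycle and hence has many white neighbors at step $0$, which automatically kills any double-firing in a pair of the form $\{v_3,u\}$; this is why no analogue of the ``$u_1\sim v_2$'' hypothesis is needed on the $u_n$-side in case (2). Carefully determining which cycle pairs actually form a CZFS (rather than a merely connected pair that fails to force) and then checking single-firing across all of them is the most technical step of the argument.
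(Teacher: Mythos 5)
Your overall strategy is the same as the paper's: reduce to Theorem \ref{t2} via the chain $pt_c(G)\le Pt_c(G)\le |G|-Z_c(G)$ and Theorem \ref{t1}, observe that $pt_c(G)=|G|-2$ is equivalent to every minimum CZFS forcing exactly one vertex per step, and then handle the two directions by exhibiting a double-firing pair (forward) and enumerating all minimum CZFSs (converse). However, there are two problems. First, your witness pairs in case (1) are wrong as written: with the cycle labeled $v_1,u_1,\dots,u_n,v_3,v_2$ as in the statement, $v_1$ is adjacent to $u_1$ and $v_2$ (not to $u_n$), and $v_3$ is adjacent to $u_n$ and $v_2$ (not to $u_1$); so $\{v_1,u_n\}$ and $\{v_3,u_1\}$ are not connected sets for $n\ge 2$ and cannot serve as CZFSs. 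The intended pairs are $\{v_1,u_1\}$ (which double-fires $u_2$ and $v_2$ when $u_1\not\sim v_2$) and $\{v_3,u_n\}$ (which double-fires $u_{n-1}$ and $v_2$ when $u_n\not\sim v_2$); these are exactly the pairs the paper uses, and your argument goes through once the labels are corrected.

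Second, and more substantively, your converse direction stops at the point where the actual work lies: you state that ``carefully determining which cycle pairs actually form a CZFS \dots is the most technical step'' but do not carry it out, so the reverse implication is not proved. The paper completes this enumeration: for $k>1$ it argues that no set of vertices of a later cycle can be a ZFS (apart from the $m=1$ degenerate case, where $\{v_{k+1},v_{k+2}\}$ or $\{v_k,v_{k+2}\}$ works with propagation time $|G|-2$); for $k=1$, $m>1$ it shows that $\{u_i,u_{i+1}\}$ is never a ZFS (by passing to the nearest neighbors $u_{j_1},u_{j_2}$ of $v_2$ on either side and observing that $\{u_{j_1},\dots,u_{j_2}\}$ stalls), and that $\{u_i,v_2\}$ is never a ZFS since both vertices have degree greater than two; this leaves only $\{v_1,v_2\},\{u_1,v_1\},\{v_3,v_2\},\{u_n,v_3\}$ as minimum CZFSs, each of which fires one vertex per step under hypotheses (1)--(2). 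Your heuristic for the asymmetry in condition (2) (that $v_3$ acquires extra white neighbors from the second cycle, preventing $\{v_3,u_{n_1}\}$ from double-firing) is a reasonable observation consistent with the paper, but it is not a substitute for verifying which pairs are ZFSs and that each of them single-fires throughout the propagation, not just at step $0$.
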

\begin{proof}
If $ pt_{c}(G)= \vert G \vert-2, $ then  $ Pt_{c}(G)  \geq \vert G
\vert-2. $  From Theorem \ref{t1}, it follows that $ Pt_{c}(G)  =
\vert G \vert-2. $ Thus, $G$ needs to be of one of the forms
described in Theorem \ref{t2}.

One should note that if $u_1$ in cycle
$v_1,u_1,\cdots,u_{n_1},v_3,v_2$, (first cycle) not to be a neighbor of $v_2$ then the
set $\{v_1,u_1\}$ is a CZFS of $G$ and in the first step two
vertices $u_2$ and $v_2$ become black. So, $ pt_{c}(G)< \vert
G \vert-2 $.\\
Similarly, if $k=1$, the set $\{v_3,u_n\}$ is another CZFS of $G$.
So, the vertex $u_n$ must be adjacent to $v_2$.

Conversely, if $G$ has the above conditions and $k>1$, we can
easily seen that even all the vertices of $i$th cycle with $1<i$
can not be a ZFS. Except in the case that $m=1$, in the $k$th
cycle each of the $\{v_{k+1},v_{k+2}\}$ or $\{v_k,v_{k+2}\}$ is a
CZFS with propagation time $|G|-2$.
In the case that $k=1$, if
$m=1$, then $G=K_3$.\\
 Let $m>1$ and $u_1$ and $u_n$ are
    adjacent to $v_2$. Then for each $1\leq i\leq n-1$, the set
    $\{u_i,u_{i+1}\}$ can not be a ZFS. To see this, let $1\leq j_1
    \leq i<i+1\leq j_2\leq n$ such that $j_1$ is the largest number
    not greater than $i$ such that $u_{j_1}$ is adjacent to $v_2$
    and $j_2$ is the smallest number
    greater than $i$ for which $u_{j_2}$ is adjacent to $v_2$. Then
    it is easily seen that $\{u_{j_1},u_{j_1+1},\cdots, u_{j_2}\}$
    can not be a ZFS (see following figure, case $A$).\\
    	In addition, for each $i$, if $u_i$ is a neighbor of $v_2$, then
    	the set
    	$\{u_i,v_2\}$ is not a ZFS (see following figure, case $B$). Because both of them have
    	degree greater than two.
    	Therefore, the only minimum CZFS's are
    	$\{v_1,v_2\},\ \{u_1,v_1\},\ \{v_3,v_2\},$ and $\{u_n,v_3\}$
    	with propagation times $|G|-2$.
    	
    	 \definecolor{xdxdff}{rgb}{0.49019607843137253,0.49019607843137253,1.}
    	 \definecolor{qqqqff}{rgb}{0.,0.,1.}
    	 \begin{tikzpicture}[line cap=round,line join=round,>=triangle 45,x=1.0cm,y=1.0cm]
    	 \clip(-2.,1.8) rectangle (11.,5.7);
    	 \draw [shift={(1.,3.)}] plot[domain=0.:3.141592653589793,variable=\t]({1.*2.*cos(\t r)+0.*2.*sin(\t r)},{0.*2.*cos(\t r)+1.*2.*sin(\t r)});
    	 \draw (-1.,3.)-- (3.,3.);
    	 \draw (-0.7092321511481208,4.038520800697592)-- (1.0266666666666668,3.);
    	 \draw (2.7232612753838783,4.015071710157627)-- (1.0266666666666668,3.);
    	 \draw (0.74,2.24) node[anchor=north west] {$A$};
    	 \draw [shift={(8.,3.)}] plot[domain=0.:3.141592653589793,variable=\t]({1.*2.*cos(\t r)+0.*2.*sin(\t r)},{0.*2.*cos(\t r)+1.*2.*sin(\t r)});
    	 \draw (6.,3.)-- (10.,3.);
    	 \draw (8.069725031470085,4.998784235475779)-- (8.02,3.);
    	 \draw (7.72,2.24) node[anchor=north west] {$B$};
    	 \begin{scriptsize}
    	 \draw [color=qqqqff] (-1.,3.) circle (2.5pt);
    	 \draw[color=qqqqff] (-0.81,3.42) node {$v_1$};
    	 \draw [color=qqqqff] (3.,3.) circle (2.5pt);
    	 \draw[color=qqqqff] (3.19,3.42) node {$v_3$};
    	 \draw [fill=xdxdff] (-0.7092321511481208,4.038520800697592) circle (2.5pt);
    	 \draw [fill=xdxdff] (-0.38146478860058997,4.446220950566243) circle (0.5pt);
    	 \draw [fill=xdxdff] (-0.054190127448115355,4.699612654457164) circle (0.5pt);
    	 \draw [fill=xdxdff] (1.7664390335318725,4.847314593640923) circle (0.5pt);
    	 \draw [fill=xdxdff] (2.061832752326009,4.694848431597284) circle (0.5pt);
    	 \draw [fill=xdxdff] (2.341294422537344,4.483552921897366) circle (0.5pt);
    	 \draw [fill=xdxdff] (2.7232612753838783,4.015071710157627) circle (2.5pt);
    	 \draw[color=xdxdff] (2.93,4.44) node {$u_{j_2}$};
    	 \draw [color=qqqqff] (1.0266666666666668,3.) circle (2.5pt);
    	 \draw[color=qqqqff] (1.21,3.42) node {$v_2$};
    	 \draw [fill=xdxdff] (0.2674703377838824,4.861021304008512) circle (0.5pt);
    	 \draw [fill=xdxdff] (1.3543059966305304,4.9683666479473905) circle (2.5pt);
    	 \draw[color=xdxdff] (1.63,5.38) node {$u_{i+1}$};
    	 \draw [fill=xdxdff] (0.8048571519372588,4.99045705023996) circle (2.5pt);
    	 \draw[color=xdxdff] (0.99,5.42) node {$u_i$};
    	 \draw [color=qqqqff] (6.,3.) circle (2.5pt);
    	 \draw[color=qqqqff] (6.25,3.42) node {$v_1$};
    	 \draw [color=qqqqff] (10.,3.) circle (2.5pt);
    	 \draw[color=qqqqff] (10.25,3.42) node {$v_3$};
    	 \draw [fill=xdxdff] (6.61853521139941,4.446220950566243) circle (0.5pt);
    	 \draw [fill=xdxdff] (6.945809872551885,4.699612654457164) circle (0.5pt);
    	 \draw [fill=xdxdff] (8.766439033531872,4.847314593640923) circle (0.5pt);
    	 \draw [fill=xdxdff] (9.061832752326008,4.6948484315972845) circle (0.5pt);
    	 \draw [fill=xdxdff] (9.341294422537345,4.483552921897366) circle (0.5pt);
    	 \draw [fill=xdxdff] (8.069725031470085,4.998784235475779) circle (2.5pt);
    	 \draw[color=xdxdff] (8.31,5.42) node {$u_i$};
    	 \draw [fill=qqqqff] (8.02,3.) circle (2.5pt);
    	 \draw[color=qqqqff] (8.27,3.42) node {$v_2$};
    	 \draw [fill=xdxdff] (7.267470337783881,4.861021304008512) circle (0.5pt);
    	 \end{scriptsize}
    	 \end{tikzpicture}
    	\end{proof}    	
    	\begin{remark}
    		In \cite{14}, Row investigated the graph with $Z(G)=2$. He
    		proved that $Z(G)=2$ if and only if $G$ is a graph of two parallel
    		paths. In his proof, he used a result of \cite{3} for
    		characterization the graphs with maximum nullity 2.
    		
    		Applying Row's theorem, in \cite{10}, the authors define a
    		special type of graphs with two parallel paths, name zigzag
    		graphs, and show that a connected graph with $pt(G)=|G|-2$ is
    		a zigzag graph with special conditions.
    		
    		Of course the graphs are introduced in Theorem \ref{t2} and Corollary
    		\ref{pt}, are a special type of the zigzag graph and since every
    		graph with $Z_{c}(G)=2$ satisfies $Z(G)=2$, we can state a proof
    		based on these known results. But here we present an elementary
    		and straightforward proof for these theorems.
    	\end{remark}
%%%%%%%%%%%%%%%%%%%%%%%%%%%%%%%%%%%%%%%%%%%%
\bibliographystyle{amsplain}

\begin{thebibliography}{5}

\bibitem{1} AIM Minimum Rank -- Special Graphs Work Group (F. Barioli, W. Barrett, S. Butler, S. M. Cioab\u{a},
D. Cvetkovi\'{c}, S. M. Fallat, C. Godsil, W. Haemers, L. Hogben, R. Mikkelson, S. Narayan, O.
Pryporova, I. Sciriha, W. So, D. Stevanovi\'{c}, H. van der Holst, K. Vander Meulen, A. Wangsness).
\emph{Zero forcing sets and the minimum rank of graphs}, Linear Algebra Appl., {\bf{428/7}}, (2008) 1628--1648.

\bibitem{2} F. Barioli, W. Barrett, S. Fallat, H. T. Hall, L. Hogben, H. van der Holst and B. Shader, \emph{Zero forcing
parameters and minimum rank problems}, Linear Algebra Appl., {\bf{433}}, (2010) 401--411.

\bibitem{3} F. Barioli, S. Fallat, D. Hershkowitz, H. T. Hall, L. Hogben, H. van der Holst and B. Shader, \emph{On the
minimum rank of not necessarily symmetric matrices: a preliminary study}, Electron. J. Linear Algebra, {\bf{18}}, (2009) 126--145.

\bibitem{4} F. Barioli, W. Barrett, S. Fallat, H.T. Hall, L. Hogben, B. Shader, P. van den Driessche, H. van der Holst,
\emph{Parameters related to tree-width, zero forcing, and maximum nullity of a graph}, J. Graph Theory, {\bf{72}}, (2013) 146–177.

\bibitem{n}B. Brimkov, R. Davila, \emph{Characterizations of the Connected Forcing Number of a Graph}, https://arxiv.org/pdf/1604.00740.

\bibitem{5} S. Butler, M. Young, Throttling zero forcing propagation speed on graphs, Australas. J. Combin., {\bf{57}}, (2013) 65–71.

\bibitem{6} K.B. Chilakamarri, N. Dean, C.X. Kang, E. Yi, Iteration index of a zero forcing set in a graph, Bull. Inst. Combin. Appl., {\bf{64}}, (2012) 57–72.

\bibitem{7} J. Ekstrand, C. Erickson, H.T. Hall, D. Hay, L. Hogben, R. Johnson, N. Kingsley, S. Osborne,
T. Peters, J. Roat, A. Ross, D. Row, N. Warnberg, M. Young,
Positive semidefinite zero forcing, Linear Algebra Appl., {\bf{439}}, (2013) 1862–1874.

\bibitem{8} S. Fallat and L. Hogben, \emph{The minimum rank of symmetric matrices described by a
	graph: a survey}, Linear Algebra Appl., {\bf{426}}, (2007) 558-582.

\bibitem{9} L. Hogben, \emph{Minimum rank problems}, Linear Algebra Appl., {\bf{432}}, (2010) 1961--1974.

\bibitem{10}L. Hogben, M. Huynh, N. Kingsley, S. Meyer, Sh. Walker, M. Young,
\emph{Propagation time for zero forcing on a graph}, Discrete Appl. Math., {\bf{160}}, (2012) 1994--2005.


\bibitem{11} L.-H. Huang, G. J. Chang and H.-G. Yeh, \emph{On minimum rank and zero forcing sets of a graph},
Linear Algebra Appl., {\bf{432}}, (2010) 2961--2973.

\bibitem{12}
C.R. Johnson, R. Loewy, P.A. Smith, The graphs for which the
maximum multiplicity of an eigenvalue is two, Linear and
Multilinear Algebra, {\bf{57}}, (2009) 713--736.

\bibitem{13} T. Peters, Semidefinite maximum nullity and zero forcing number, Electron. J. Linear Algebra, {\bf{23}} (2012) 815–830.

\bibitem{14}D.D. Row, \emph{A technique for computing the zero forcing number of a
	graph with a cut-vertex}, Linear Algebra Appl., {\bf{436}}, (2012) 4423--4432.





\end{thebibliography}
%%%%%%%%%%%%%%%%%%%%%%%%%%%%%%%%%%%%%%%%%%%
%% Please cite your relevant papers but at most total 5 papers/books.
%- r_{0}((ab)^{\frac{1}{4}}-\sqrt{a})^{2}
%%%%%%%%%%%%%%%%%%%%%%%%%%%%%%%%%%%%%%%%%%%%

\end{document}